\newcommand{\bi}{\bar{i}}
\newcommand{\bj}{\bar{j}}
\newcommand{\bk}{\bar{k}}
\newcommand{\bl}{\bar{l}}
\newcommand{\bq}{\bar{q}}
\newcommand{\al}{\alpha}
\newcommand{\bal}{\bar{\alpha}}
\newcommand{\be}{\beta}
\newcommand{\bbe}{\bar{\beta}}
\newcommand{\ga}{\gamma}
\newcommand{\De}{\Delta}
\newcommand{\de}{\delta}
\newcommand{\eps}{\epsilon}
\newcommand{\ka}{\kappa}
\newcommand{\bka}{\bar{\kappa}}
\newcommand{\la}{\lambda}
\newcommand{\bla}{\bar{\lambda}}
\newcommand{\m}{\mu}
\newcommand{\bm}{\bar{\mu}}
\newcommand{\bn}{\bar{\nu}}
\newcommand{\tta}{\theta}
\newcommand{\si}{\sigma}
\newcommand{\bsi}{\bar{\sigma}}
\newcommand{\Si}{\Sigma}
\newcommand{\ta}{\tau}
\newcommand{\fD}{\mathfrak{D}}
\newcommand{\fz}{\mathfrak{z}}
\newcommand{\bfz}{\bar{\mathfrak{z}}}
\newcommand{\calA}{\mathcal{A}}
\newcommand{\calE}{\mathcal{E}}
\newcommand{\calG}{\mathcal{G}}
\newcommand{\calH}{\mathcal{H}}
\newcommand{\calO}{\mathcal{O}}
\newcommand{\calR}{\mathcal{R}}
\newcommand{\calS}{\mathcal{S}}
\newcommand{\om}{\omega}
\newcommand{\Om}{\Omega}
\newcommand{\bbC}{\mathbb{C}}
\newcommand{\bbR}{\mathbb{R}}
\newcommand{\dc}{d^c}
\newcommand{\del}{\operatorname{\partial}}
\newcommand{\bdel}{\operatorname{\bar{\partial}}}
\newcommand{\supp}{\operatorname{supp}}
\newcommand{\wge}{\wedge}
\theoremstyle{plain}
\newtheorem{theorem}{Theorem}[section]
\newtheorem{lemma}[theorem]{Lemma}
\newtheorem{proposition}[theorem]{Proposition}
\newtheorem{corollary}[theorem]{Corollary}
\theoremstyle{definition}
\newtheorem{remark}[theorem]{Remark}
\newtheorem{definition}[theorem]{Definition}
\numberwithin{equation}{section}
\newtheorem*{sub}{}
\author[Aleyasin]{S. Ali Aleyasin}
\address{Department of Mathematics\\
Stony Brook University\\ Stony Brook, NY 11794}
\email{aleyasin@math.sunysb.edu}
\author[Chen]{Xiu-Xiong Chen}
\address{Department of Mathematics\\
Stony Brook University\\ Stony Brook, NY 11794}
\email{xiu@math.sunysb.edu}
\thanks{The first author is grateful to Prof. Eric Bedford for fruitful discussions and his useful comments. The authors would like to thank Weiyong He and Zheng Kai for their careful reading of an earlier version of this note and pointing out a slight inaccuracy in the draft. }
\title[Geodesics in the space of K\"ahler metrics with prescribed singularities]{on the Geodesics 
in the space of K\"ahler metrics with prescribed singularities}
\begin{document}

\begin{abstract}
Motivated by the results of B. Berndtsson, in this memoir we use the new  estimates developed  by  W. He to extend a theorem of the second author on  the existence  of weak $C^{1,1}$ geodesics between two smooth non-degenerate K\"ahler potentials to the case where the metrics on the end points may have singularities on some analytic set and may be positive semi-definite. 

\end{abstract}

\maketitle

\section{introduction}

Towards the end of the 1980's, Mabuchi in \cite{Ma87} and Bourguignon in \cite{ Bo85} independently started studying the geometric structure of the space of k\"ahler metrics in a fixed cohomology class over a given manifold. Later, apparently independently, Semmes in \cite{Se92} and Donaldson in \cite{Do96} rediscovered this idea from a different viewpoint. The fundamental idea behind these works was the introduction of a Weil-Petersson-type metric on this space that would endow it with a riemannian metric and a compatible connection.
Since two K\"ahler forms in a given cohomolgy class differ by the \(dd^c\) of a potential, in order to study the space of metrics, it will suffice to study the real K\"ahler potentials after fixing a background K\"ahler form \(\om\). \footnote{Henceforth, we shall assume that \(\om\) is a fixed smooth positive definite k\"ahlerian metric in the background. Also, the norm of  various tensors are measured with respect to  \(\om\).}
One is lead therefore to studying the space \(\calH\) defined as follows:
\begin{equation*}
\calH
:=
\left\{ \phi \in C^{\infty}(X)| \om + dd^c \phi > 0 \right\}
\end{equation*}

The study  proceeded by formal calculations of the connection coefficients, curvature, and the geodesic equation. 
For example, it was shown that \(\calH\) is a locally homogeneous space of non-positive sectional curvature. 
As in the usual case of riemannian manifolds, one can define the length and energy of a curve, and by taking the first variation of the energy functional for curves, one can derive the geodesic equation.
 It was proved that if the curve \(\phi(t)\) is a geodesic in \(\calH\), it must satisfy the equation \footnote{Two remarks on notation: Here and hereafter, differential operators without subscript are assumed to be in the space direction. In certain occasions, we may use the subscript \(X\) for a space differential operator for emphasis. Also, both \(\del_t\) and \(t\) as subscript are used to denote time derivatives.}
\begin{equation}
\label{ge}
\del_{tt} \phi -{1 \over 2}\vert d \del_t \phi \vert_{\phi}^2 = 0 
\end{equation}

               In \cite{Do96}, Donaldson formulated the relation between the geometry of \(\calH\) and some older problems in few conjectures. This initiated a new programme for studying some classical problems in K\"ahler geometry.
 More specifically, these conjectures related problems such as the existence and the uniqueness of extremal K\"ahler metrics and K\"ahler metrics of constant scalar curvature to the geometry of the space \(\calH\).
  Of particular importance were the geodesics and the the metric structure of the space \(\calH\).

Further, it was proved that equation \ref{ge} can be written as a homogeneous complex Monge-Amp\`ere equation. 
More precisely, let  \(\phi_1,\phi_2 \in \calH\) be two k\"ahlerian potentials on \(X\). 
Then let \(\ta\)  be  the complexification of the time variable \(t \in [0,1]\). We complexify the time variable in the following manner: put \( \ta:= t + \sqrt{-1} \si \), where \(\si \in S^1\) and the boundary data is extended identically along the imaginary direction. Then, \(
\ta\) will belong to \(\Si\), a cylinder of unit height and radius. Assume that \(\pi: X \times \Si \rightarrow X\) is the projection on the first component. We then have the following \( (1,1)\)- forms on the product: \(\Om:= \pi^* \om\) and \(\Om_\phi = \Om + dd^c_X \phi + dd^c_{\tau} \phi \).
 A curve, therefore, can be thought of as a potential \(\phi(x,\ta) \) on \(X \times \Si\). 
 The equation of the geodesic connecting the two potentials \(\phi_1\) and \(\phi_2\) will then be equivalent to the following boundary value problem:

\begin{eqnarray}
\label{HCMA-1}
\begin{cases}
  \Om_{\phi}^{n+1} = 0 \text{  on  }  X \times \Si \nonumber \\
\phi|_{\del \left( X \times S^1  \times \{j\} \right)} = \phi_j \text{ }, j=0,1
\end{cases}
\end{eqnarray}

Donaldson had conjectured that in \(\calH\) the geodesics realise the minimum distance, and he further conjectured that the geodesic connecting two smooth potentials is smooth. The second author proved the following theorem in \cite{Ch00}:

\begin{theorem} \label{ChTh3} \cite{Ch00} Assume that the potentials on the end points, \(\phi_0\) and \(\phi_2\), are smooth and \(\om + d d^c \phi_j > 0, \text{  } j=0,1 \). Then, there exists a  function with bounded \(\del \bdel\)-derivatives on \(X \times \Si\) which solves the geodesic equation weakly. More precisely, there exists a geodesic path \( \phi(t) : [0,1] \rightarrow \calH_{1,1} \) and a uniform constant such that \( 0 \leq dd^c_{X \times \Si} \phi  \leq C \), wherein the subscripts for \(dd^c\) are to denote that the operators are restricted to the corresponding tangent directions.
\end{theorem}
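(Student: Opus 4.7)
The plan is to solve the boundary value problem by the continuity method, regularising the degenerate right-hand side. For $\ep \in (0,1]$ consider the family of Dirichlet problems on $X \times \Si$
\begin{equation*}
\Om_{\phi_\ep}^{n+1} = \ep \, \Om_0^{n+1},
\qquad
\phi_\ep\big|_{X \times \{j\}} = \phi_j, \quad j = 0,1,
\end{equation*}
where $\Om_0$ is a fixed positive $(1,1)$-form on $X \times \Si$ (for instance $\Om + dd^c_\ta \eta$ for a suitable $\eta = \eta(t)$). Each such equation is non-degenerate and is expected to admit a smooth $\Om$-plurisubharmonic solution $\phi_\ep$; the theorem will then be recovered by passing to the limit $\ep \to 0$ using uniform $C^{1,1}$ estimates. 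The regularity cannot be pushed beyond $C^{1,1}$, since the limiting equation is genuinely degenerate.

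To implement the continuity method I would proceed in three stages. First, to obtain an initial solution one uses a convex combination such as $t\phi_1 + (1-t)\phi_0$ corrected by a function of $t$ alone chosen so that the resulting potential is strictly $\Om$-plurisubharmonic on the product and solves the $\ep = 1$ equation (up to rescaling $\Om_0$). Second, openness of the set of admissible $\ep$ follows from the implicit function theorem: at a non-degenerate solution the linearisation of the Monge--Amp\`ere operator is strictly elliptic on $X \times \Si$ with Dirichlet data, and is therefore invertible between the appropriate H\"older spaces. Third, closedness --- the analytic heart of the argument --- requires a priori estimates of $\phi_\ep$ up to order $C^{1,1}$, uniform in $\ep$.

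These estimates are arranged in a chain. The $C^{0}$ bound comes from the maximum principle together with comparison with explicit sub- and super-solutions built from the boundary data. The $C^1$ bound uses the symmetry of the problem: the $S^1$-invariance of the boundary data implies $\del_\si \phi_\ep \equiv 0$, so it suffices to bound $\del_t \phi_\ep$ and the spatial gradient, which is done by an interior maximum principle combined with boundary barriers at $t = 0$ and $t = 1$. The interior $C^{1,1}$ bound comes from a Yau-type maximum principle applied to the quantity $\log \tr_{\Om} \Om_{\phi_\ep} - A \phi_\ep$ for $A$ large. Finally, the boundary $C^{1,1}$ estimate is obtained by constructing local sub- and super-barriers near $X \times \del \Si$ which control the tangential and mixed second derivatives, the normal-normal component being recovered from the equation itself. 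With the uniform $C^{1,1}$ bound in hand, a subsequence of $\{\phi_\ep\}$ converges in $C^{1,\al}$ for every $\al < 1$ to a limit $\phi$ satisfying $0 \le dd^c \phi \le C$, and the Bedford--Taylor convergence theory for Monge--Amp\`ere currents of bounded plurisubharmonic functions yields $\Om_\phi^{n+1} = \lim_{\ep \to 0} \ep \, \Om_0^{n+1} = 0$ as currents, which is the weak geodesic equation.

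The main obstacle is the uniform-in-$\ep$ boundary $C^{1,1}$ estimate. As $\ep \to 0$ the Monge--Amp\`ere operator degenerates along the kernel of $\Om_{\phi_\ep}$, so barrier constants built from its ellipticity tend to blow up. Controlling this requires exploiting the product structure of $X \times \Si$: since $X$ is compact (no boundary in the space direction) and $\del \Si$ is one-dimensional in the $\ta$-plane, one can separate tangential and normal directions and build barriers from the fixed smooth boundary potentials $\phi_0$ and $\phi_1$ regarded as $\om$-plurisubharmonic functions on $X$. This is precisely the step at which the second author's original estimate, and its refinements by W.~He used in the present paper, are indispensable.
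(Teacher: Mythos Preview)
This theorem is not proved in the present paper: it is stated with the citation \cite{Ch00} as background, recording the second author's earlier result. There is therefore no proof in this paper to compare your proposal against.

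That said, your sketch is an accurate outline of the method of \cite{Ch00}: one approximates the degenerate equation by the family $\Om_{\phi_\ep}^{n+1} = \ep\,\Om_0^{n+1}$, runs the continuity method for each fixed $\ep$, and the crux is the uniform-in-$\ep$ a priori estimate on $\Delta\phi_\ep$ (interior via a Yau-type argument, boundary via barriers exploiting the product structure and the compactness of $X$). Your identification of the boundary $C^{1,1}$ estimate as the decisive step, and of the role of the smooth strictly positive boundary metrics $\om_{\phi_j}$ in constructing barriers, is correct. The present paper's contribution is precisely to relax those hypotheses on the boundary data, replacing the global $C^{1,1}$ bound by a weighted one near the singular set, using He's reformulation of the estimates.
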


The existence theorem in \cite{Ch00} requires the K\"ahler metrics on the boundaries to be smooth and strictly positive everywhere. In a more recent work presented in \cite{He12}, from where our estimates are inspired, W. He  has proved  that the assumption on the boundary data may be weakened by considering the original geodesic equation, equation \ref{ge}. Namely, one may still prove regularity of the solution for the boundary conditions whose associated metrics are possibly positive semi-definite and have a bound on their laplacians. 
\begin{theorem}
\label{He-1}
\cite{He12} Let \(\phi_0\) and \(\phi_1\) be two potentials with bounded laplacian. Then, there exists a generalised solutions, \(\phi(t)\), of the geodesic equation, such that 
\[
0
\leq
n+\De \phi(t)
\leq
C
\]
where \(C=C\left( \Vert \phi_{0,1} \Vert_{\infty}, \Vert \De \phi_{0,1} \Vert_{\infty}, \om \right) \). \footnote{In this article, the laplacian, \(\De\), is defined so that it has negative spectrum.}
\end{theorem}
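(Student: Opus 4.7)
The plan is to derive Theorem \ref{He-1} from Chen's Theorem \ref{ChTh3} by a double approximation whose crux is an a priori Laplacian estimate depending only on the $L^\infty$ and one-sided Laplacian norms of the boundary data, not on any of their higher derivatives.

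First, I would regularize the boundary data. Approximate each $\phi_j$ ($j=0,1$) by a sequence of smooth, strictly $\om$-plurisubharmonic potentials $\phi_j^\epsilon \in \calH$ satisfying $\|\phi_j^\epsilon\|_\infty \leq \|\phi_j\|_\infty + o(1)$ and $\|n + \De\phi_j^\epsilon\|_\infty \leq \|n + \De\phi_j\|_\infty + o(1)$. A Demailly-type construction---convolution in local charts, patching with a partition of unity, and addition of a vanishing multiple of a fixed strictly $\om$-psh bump to restore strict positivity---accomplishes this, since convolution preserves one-sided Laplacian bounds. For each $\epsilon$, Chen's theorem produces a weak $C^{1,1}$ geodesic $\phi^\epsilon$ on $X\times\Si$ between $\phi_0^\epsilon$ and $\phi_1^\epsilon$, itself obtained as a limit $\delta\to 0$ of smooth solutions $\phi^{\epsilon,\delta}$ of the non-degenerate Dirichlet problem $\Om_{\phi^{\epsilon,\delta}}^{n+1} = \delta\,\tilde\Om^{n+1},$ where $\tilde\Om$ is a fixed K\"ahler form on $X\times\Si$ (e.g.\ $\Om + dd^c(t^2)$).

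The goal is to bound $\|\phi^{\epsilon,\delta}\|_\infty$ and $\|n + \De_{\tilde\Om}\phi^{\epsilon,\delta}\|_\infty$ uniformly in $\epsilon,\delta$ by a constant depending only on $\|\phi_{0,1}\|_\infty$, $\|\De\phi_{0,1}\|_\infty$, and $\om$. The $L^\infty$-bound is standard, via comparison of $\phi^{\epsilon,\delta}$ against the linear-in-$t$ interpolant of its boundary values. The heart of the argument---where the new input of \cite{He12} enters---is the Laplacian estimate. Following He, one applies the maximum principle on $X\times\Si$ to a test quantity $H := \log\bigl(n+1 + \De_{\tilde\Om}\phi^{\epsilon,\delta}\bigr) - \lambda\,\phi^{\epsilon,\delta}$ with $\lambda$ large in terms of the bisectional curvature of $\om$. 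A Yau-Aubin commutator computation with respect to $\tilde\Om$, combined with the Monge-Amp\`ere equation, shows that at any interior maximum of $H$ the quantity $n+1 + \De_{\tilde\Om}\phi^{\epsilon,\delta}$ is controlled by a constant depending only on $\lambda$, $\om$, and $\sup_{\del(X\times\Si)} H$.

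The principal obstacle is the matching boundary estimate: one needs $\sup_{\del(X\times\Si)} \De_{\tilde\Om}\phi^{\epsilon,\delta}$ to be controlled by $\|\De\phi_{j}^\epsilon\|_\infty$ alone, which is delicate because $\phi^{\epsilon,\delta}$ is not a priori constant in the $\ta$-direction at the boundary. I would construct matched upper and lower barriers obtained by extending $\phi_j^\epsilon$ independently of $\ta$ and adding convex quadratics in $t$ designed to be sub- and super-solutions of the perturbed equation; the tangential second derivatives on the boundary are then pinned by those of $\phi_j^\epsilon$, while the normal second derivative is recovered from the Monge-Amp\`ere equation itself, in the spirit of the Caffarelli-Kohn-Nirenberg-Spruck boundary $C^2$ estimate. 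Once the uniform bound $0 \leq n + \De_{\tilde\Om}\phi^{\epsilon,\delta} \leq C$ is in hand, sending $\delta\to 0$ and then $\epsilon\to 0$, and invoking pluripotential-theoretic weak-$*$ compactness of positive closed currents, yields a limit $\phi$ solving the geodesic equation in the generalised sense with $0 \leq n + \De\phi(t) \leq C$, this one-sided bound being stable under the limit.
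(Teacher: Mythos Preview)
The paper does not give a standalone proof of Theorem~\ref{He-1}; it is quoted from \cite{He12}. However, Section~\ref{Laplace-estimates} and Appendix~\ref{appA} reproduce He's computation (with an added weight factor $\zeta_\eta^p$), so the comparison below is with that argument, specialised to $\zeta_\eta\equiv 1$.

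Your overall scheme---regularise, solve the nondegenerate equation, prove a uniform Laplacian bound, pass to the limit---is correct, and you have correctly identified the boundary $C^2$ estimate as the crux. But the route you sketch for that boundary estimate is precisely the one that \emph{fails} without strict positivity of the end-point metrics, and avoiding this failure is the entire content of He's contribution over Chen's Theorem~\ref{ChTh3}.

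Your test function carries the full product Laplacian $n+1+\De_{\tilde\Om}\phi$, so on $\del(X\times\Si)$ you must control $\phi_{tt}$ and $\phi_{t\al}$. Recovering $\phi_{tt}$ from the Monge--Amp\`ere equation gives $\phi_{tt}=\delta\,\om^n/\om_\phi^n + |d\phi_t|_\phi^2$, and both terms involve $\om_\phi^{-1}$ on the boundary, hence blow up with the reciprocal of the smallest eigenvalue of $\om_{\phi_j^\epsilon}$. A CKNS-style barrier therefore yields a constant depending on the strict positivity of the regularised data, i.e.\ on $\epsilon$, not merely on $\Vert\De\phi_j\Vert_\infty$. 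This dependence is exactly why Chen's original theorem required strictly positive end-point metrics; your proposal has not removed it.

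He's device, reproduced in the paper's Appendix, is to put only the \emph{spatial} Laplacian inside the logarithm and to add a quadratic in $t$: one applies the linearised operator $\fD$ of equation~(\ref{linearisation}) to $\log(n+\De_X\phi)-C\phi+\tfrac{1}{2}t^2$. The term $\tfrac{1}{2}t^2$ contributes $1/\calG(\phi)$ via~(\ref{lin-t-2}), which is exactly what is needed to close the Yau-type differential inequality and arrive at~(\ref{eq-4.12}). The point is that on the boundary this test function equals $\log(n+\De_X\phi_j)-C\phi_j+\text{const}$, which is bounded \emph{directly} by the hypotheses, with no barrier construction and no reference whatsoever to a lower bound on $\om_{\phi_j}$. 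The bound on $|\del_t\phi|$ is obtained separately (the paper quotes Berndtsson \cite{Be11}). So the missing idea in your proposal is the choice of test function: replace $\De_{\tilde\Om}$ by $\De_X$ and add $t^2/2$.
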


    A modification of the estimates used to prove \ref{He-1} will be used in the present note in order  to derive weighted laplacian estimates. Combined with the result in \cite{Be11}, it yields the following \emph{a priori} bound, independent of \(\eps>0\).

\[
| \del_t \phi| + |\Delta \phi| < C
\]

   A  class of singularities are the so-called \emph{conical} singularities a particular case of which are the \emph{orbifold-type} singularities. They were studied on Riemann surfaces  by E. Picard \cite{Pic8}. Recently, after Donaldson's linear theory \cite{Do11} on conical K\"ahler-Einstein metrics, many interesting results have emerged. For instance, cf. \cite{Br11, JMR11,LS-12} and \cite{SW13}. In \cite{Gu12} this is extended to Poncar\'e-type singularities. This generalises the work of Troyanov in \cite{To91} on preassigning the curvature on Riemann surfaces with prescribed conical singularities. Also, in his study of extremal hermitian metrics, the second author considered conical metrics in \cite{Ch99}.

    Existence of geodesics between conical metrics was studied in \cite{CZ12}, where the approaches of \cite{JMR11} and \cite{Do11} were combined with those in \cite{Ch00} in order to obtain the following.

\begin{theorem}
\cite{CZ12}
Let \(\calH_C \subset \calH_{\om_0} \cap C_{\be}^{2,\al}\) denote the K\"ahler potentials with bounded Levi-Civita connection and lower bound on Ricci curvature. Then, any two K\"ahler cone metrics in \(\calH_C\) can be connected by a unique \(C^{1,1}_\be\) cone geodesic. 
\end{theorem}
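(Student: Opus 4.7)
The plan is to transplant the approximation scheme underlying Theorem \ref{ChTh3} into the conical category, replacing the smooth estimates of \cite{Ch00} by their weighted H\"older analogues in the spaces $C^{k,\al}_\be$ of \cite{Do11} and \cite{JMR11}. The geodesic equation once again becomes the HCMA boundary-value problem \ref{HCMA-1} on $X \times \Si$, but now the background K\"ahler form $\om$ is taken to be a fixed reference cone form of angle $2\pi\be$ along the divisor $D$, and one seeks a solution $\phi$ that is itself conical along $D \times \Si$.

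First I would introduce the perturbation
\[
\Om_{\phi_\eps}^{n+1} = \eps \, \Om^{n+1} \quad \text{on } X \times \Si, \qquad \phi_\eps |_{\del \Si} = \phi_j,
\]
and solve it for each $\eps > 0$ by a continuity method in $C^{2,\al}_\be(X \times \Si)$. Openness at each stage uses Donaldson's linear theory \cite{Do11} for the conical Laplacian on the product manifold; closedness requires the a priori estimates described below. Uniform $C^0$ bounds on $\phi_\eps$ follow by barrier arguments against sub- and super-solutions built from the boundary data in $\calH_C$, and Berndtsson's plurisubharmonic variation result \cite{Be11} feeds into uniform control of $\del_t \phi_\eps$ through the convexity of $t \mapsto \phi_\eps(\cdot, t)$ along each fibre.

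The core of the argument is the uniform weighted second-order bound $|dd^c_{X \times \Si} \phi_\eps|_\be \leq C$ independent of $\eps$. Combining the strategy of \cite{Ch00} with the weighted laplacian method of Theorem \ref{He-1}, one applies the maximum principle to an auxiliary function of the form
\[
H := \log \tr_\om (\om + dd^c \phi_\eps) - A\, \phi_\eps
\]
on $X \times \Si$, with $A$ chosen large. The Chern--Lu inequality on the conical background produces an unfavourable Ricci contribution that is controlled thanks to the lower bound on the Ricci curvature built into the definition of $\calH_C$, while the boundedness of the Levi-Civita connection of the boundary data ensures that the error terms generated by differentiating the singular metric across $D$ are dominated by $C\,\tr_{\om_{\phi_\eps}} \om$, hence absorbed by the $A\, \phi_\eps$ term. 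The main obstacle I expect is precisely at this step: verifying that all commutator corrections coming from the cone geometry of $\om$ remain uniformly bounded \emph{across} $D$, not merely locally outside it, and that the resulting estimate is genuinely of $C^{1,1}_\be$ scale. This is where the combined hypotheses defining $\calH_C$ pay their dividends.

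Granted these estimates, letting $\eps \to 0$ extracts a limit $\phi \in C^{1,1}_\be(X \times \Si)$ solving $\Om_\phi^{n+1} = 0$ weakly with the prescribed boundary values, which is the sought conical geodesic. Uniqueness follows from the pluripotential comparison principle: if $\phi$ and $\tilde\phi$ are two such solutions, then $\phi - \tilde\phi$ is a bounded quasi-plurisubharmonic function on $X \times \Si$ with vanishing boundary data and satisfies a degenerate Monge--Amp\`ere inequality; the Bedford--Taylor comparison, valid in the conical setting because both potentials lie in $C^{1,1}_\be$, forces $\phi = \tilde\phi$.
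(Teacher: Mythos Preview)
This theorem is not proved in the present paper; it is quoted from \cite{CZ12} as background, and the paper offers no argument beyond the one-sentence description that the approaches of \cite{JMR11} and \cite{Do11} are combined with those of \cite{Ch00}, working intrinsically in the weighted H\"older spaces $C^{k,\al}_\be$ of the cone metric. Your outline is a reasonable reconstruction of exactly that strategy---the $\eps$-perturbation and continuity method of \cite{Ch00}, Donaldson's linear theory for openness, and a Chern--Lu/Yau-type maximum principle with the Ricci lower bound and connection bound absorbing the singular error terms---so at the level of overall architecture it matches what the paper attributes to \cite{CZ12}. Since the paper contains no proof of its own here, there is nothing further to compare against; the honest assessment is that your sketch is plausible and consonant with the cited source, but verification of the delicate step you yourself flag (uniform control of commutator terms across $D$ in the $C^{1,1}_\be$ scale) would require consulting \cite{CZ12} directly rather than this paper.
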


In particular, this proves the existence of geodesics once the cone angle is small, \(\be < {1 \over 2}\). The methods used to prove the theorem above are more intrinsic than the methods we have adopted here in the sense that analysis is done in appropriate function spaces with respect to the cone metric itself as opposed to our approach, which uses a smooth  reference metric.

\section{Main results}
Our main focus in the present note will be on proving the a theorem which guarantees existence and uniqueness of geodesics between two metrics with conical singularities along a given divisor such that at a given time slice, the space derivatives are bounded away from the divisor. Our method is a modification of the estimates by W. He and is motivated by the work of Berdtsson in \cite{Be11}, where existence of weak geodesics between bounded potentials is used in order to prove uniqueness of weak K\"ahler-Einstein metrics with bounded potentials in case of positive Chern class. In light of the crucial r\^ole this theorem  playes in the recent work of Donaldson, Chen, and Sun, \cite{Chen-Donaldson-Sun}, it is perhaps important to give
a more direct proof of Berdntsson's result on the uniqueness of geodesics between two singular K\"ahler Einstein metrics with $C^{1,1}$ bounded potentials, utilizing the main theorem in this paper.

We prove the following \footnote{For notations and definitions, see Section \ref{not-def}}

\begin{theorem}
\label{Th-1.3}
Let \(\phi_0\) and \(\phi_1 \) be two potentials whose corresponding metrics \(\om_{\phi_0}\) and \(\om_{\phi_1}\) have conic singularities of angle \(\be\) along the smooth divisor \(V\). Then, there is a unique weak geodesic, \(\phi(t)\), connecting them in the following sense: the solution is everywhere H\"older continuous, and on compact sets away from the singularity, its -spatial- complex hessian is uniformly bounded.
\end{theorem}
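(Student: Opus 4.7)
The plan is to construct the geodesic by a regularisation argument: approximate the two conic potentials by a family of smooth, strictly positive K\"ahler potentials, solve the homogeneous complex Monge--Amp\`ere boundary value problem \ref{HCMA-1} for each approximant using Theorem \ref{ChTh3}, and then pass to the limit using uniform \emph{a priori} estimates that are allowed to degenerate only along the divisor $V$. Uniqueness will come from the Bedford--Taylor comparison principle applied to bounded $\Om$-plurisubharmonic solutions.

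First I would construct explicit smooth approximants $\phi_{j,\eps}$ of $\phi_j$ for $j=0,1$. Writing each $\phi_j$ locally as $\psi_j + \be \log |s|^2_h$ modulo a smooth term, with $s$ a defining section of the line bundle $\calO(V)$ and $h$ a smooth hermitian metric on it, I would replace $|s|^{2\be}$ by the regularised weight $(|s|^2_h + \eps^2)^{\be}$, correcting by a smooth function if necessary so that $\phi_{j,\eps}\in\calH$. These potentials converge monotonically to $\phi_j$ in $L^\infty(X)$ and in $C^\infty_{\mathrm{loc}}(X\setminus V)$; near $V$, the Laplacians $\De \phi_{j,\eps}$ remain uniformly bounded on compact subsets away from $V$, but blow up in a precisely controlled fashion of order $(|s|^2_h+\eps^2)^{\be-1}$ as $|s|\to 0$.

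For each $\eps>0$, Theorem \ref{ChTh3} yields a weak solution $\phi_\eps$ of \ref{HCMA-1} with boundary data $\phi_{j,\eps}$ satisfying $0 \le dd^c_{X\times\Si}\phi_\eps \le C(\eps)$. The uniform $L^\infty$ bound and a uniform bound on $|\del_t\phi_\eps|$ follow from the maximum principle and the uniformity of the boundary data, in the spirit of Berndtsson's argument in \cite{Be11}. The heart of the matter is a uniform \emph{weighted} Laplacian estimate, modelled on He's proof of Theorem \ref{He-1}. Concretely, I would apply the maximum principle to
\[
F_\eps := \log\bigl(n+\De \phi_\eps\bigr) + (1-\be)\log(|s|^2_h+\eps^2) - A\phi_\eps,
\]
where $A$ is a large constant chosen to dominate the bending contribution from the ambient bisectional curvature. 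At an interior maximum, the good quadratic terms appearing in He's computation absorb the cross terms generated by differentiating the weight $\log(|s|^2_h+\eps^2)$; this is possible because $dd^c \log(|s|^2_h+\eps^2)$ is bounded above independently of $\eps$, and the $A\phi_\eps$ term supplies the remaining slack. On the boundary $\del(X\times\Si)$, the value of $F_\eps$ is bounded uniformly in $\eps$ because $(|s|^2_h+\eps^2)^{1-\be}\cdot(n+\De\phi_{j,\eps})$ is uniformly bounded by construction of the regularisation. This produces an interior estimate of the form
\[
(|s|^2_h+\eps^2)^{1-\be}(n+\De \phi_\eps) \le C,
\]
with $C$ independent of $\eps$, which is the desired weighted Hessian bound.

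Combined with the uniform $L^\infty$ and $|\del_t|$ bounds, this weighted estimate gives a uniform $C^{1,1}$ bound for $\phi_\eps$ on compact subsets of $(X\setminus V)\times\Si$. Standard diagonal extraction then produces a limit $\phi = \lim_{\eps\to 0}\phi_\eps$ which is a bounded $\Om$-psh solution of $\Om_\phi^{n+1}=0$ in the Bedford--Taylor sense and which has bounded spatial complex Hessian away from $V\times\Si$. Global H\"older continuity on all of $X\times\Si$ is inherited from the explicit H\"older continuity of the model potential $|s|^{2\be}$ together with a transversal modulus-of-continuity argument in the time direction driven by the uniform $|\del_t\phi_\eps|$ bound. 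Finally, uniqueness of the weak geodesic follows from the comparison principle for bounded $\Om$-psh potentials of the degenerate Monge--Amp\`ere operator. The principal technical obstacle is the weighted maximum principle computation for $F_\eps$: one must check carefully that the exponent $1-\be$ is simultaneously small enough to yield a genuine interior estimate on $X\setminus V$ and large enough to neutralise the boundary blow-up of $\De \phi_{j,\eps}$, and that the negative good terms coming from the homogeneous Monge--Amp\`ere equation dominate all error terms uniformly in $\eps$.
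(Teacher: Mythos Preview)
Your overall architecture---regularise the conic boundary data, obtain approximate solutions, prove a weighted Laplacian estimate by a He-type maximum principle, and extract a limit---is the same as the paper's. Two points in the execution, however, do not go through as written.

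\textbf{(i) You cannot run the maximum principle on the output of Theorem~\ref{ChTh3}.} The solution $\phi_\eps$ you obtain from Chen's theorem is only $C^{1,1}$ and satisfies the \emph{homogeneous} equation $\Om_{\phi_\eps}^{n+1}=0$. He's computation involves the linearised operator $\fD$, which contains a factor $1/\calG(\phi)$ with $\calG(\phi)=\phi_{tt}-|d\phi_t|_\phi^2$; on a solution of the degenerate equation this quantity vanishes (or $\det(g_{\al\bbe}+\phi_{\al\bbe})$ does), so $\fD$ is not defined, and the AM--GM step that produces the interior bound collapses. Moreover $\De\phi_\eps$ is only $L^\infty$, so $F_\eps=\log(n+\De\phi_\eps)+\dots$ need not even be continuous. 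The paper avoids this by working throughout with the \emph{non-degenerate} family \eqref{eq-1.5},
\[
\Om_\phi^{n+1}=\frac{\eps\,e^f}{(|s|_h^2+\eta)^p}\,\Om^{n+1},
\]
whose solutions are smooth for each $\eps,\eta>0$; the weighted maximum-principle estimate is proved at this level and is then uniform as $(\eps,\eta)\to(0,0)$. The singular right-hand side is not accidental: in the final AM--GM inequality the factor $(|s|_h^2+\eta)^p$ from the equation is exactly what converts a bound on $n+\De\phi$ into a bound on $w=(|s|_h^2+\eta)^p(n+\De\phi)$ at an interior maximum. Your scheme is easily repaired by inserting this perturbation, but as stated it invokes Chen's theorem as a black box and then applies a computation that requires strictly more than that black box delivers.

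\textbf{(ii) The bending of the weight is controlled from below, not above.} You justify absorbing the weight contribution by saying that $dd^c\log(|s|_h^2+\eps^2)$ is bounded above independently of $\eps$. This is false: in the normal direction one computes $\del_{z_1}\del_{\bar z_1}\log(|z_1|^2+\eps^2)=\eps^2/(|z_1|^2+\eps^2)^2$, which equals $\eps^{-2}$ at $z_1=0$. What does hold, and what the paper uses (inequality \eqref{6/1} and Definition~\ref{admiss}), is the uniform \emph{lower} bound
\[
dd^c\log(|s|_h^2+\eta)\ \ge\ dd^c\log|s|_h^2\ \ge\ -C_1\,\om.
\]
This is sufficient because in the expression for $\fD\log\zeta_\eta$ the eigenvalues $(\log\zeta_\eta)_{\la\bla}$ are multiplied by non-negative coefficients such as $(1+\phi_{\la\bla})^{-1}$ and $\phi_{t\la}\phi_{t\bla}(1+\phi_{\la\bla})^{-2}/\calG(\phi)$; a lower bound on the former therefore gives a lower bound on the whole term, which is then absorbed by choosing the constant $C$ in $-C\phi$ large. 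Your argument needs this correction of sign to close.

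With these two fixes your proposal becomes essentially the paper's proof; the remaining steps (the $L^\infty$ bound, Berndtsson's $|\del_t\phi|$ bound, the H\"older estimate across $V$ via the Laplacian growth, and uniqueness among bounded psh potentials) are as you describe.
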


We can then formulate the more general version of theorem \ref{Th-1.3} the previous theorem as follows:
\begin{theorem}
\label{Th-1.8}
Assume that \(\phi_0, \phi_1\) are two potentials belonging to the space \(\calH_{\xi}(X)\) for a singularity \(\calS\), and an admissible weight function \(\xi\). Then, there is a unique weak geodesic connecting them with bounded laplacian away from the singular set.
\end{theorem}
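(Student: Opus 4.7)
The plan is to reduce Theorem \ref{Th-1.8} to an approximation scheme combined with a uniform a priori estimate in which the weight function $\xi$ controls the blow-up of the Laplacian near the singular set $\calS$. Concretely, I would first regularise the boundary data: replace $\phi_0,\phi_1$ by a decreasing family of smooth strictly $\om$-plurisubharmonic potentials $\phi_{0,\eps},\phi_{1,\eps}$ converging respectively to $\phi_0,\phi_1$, obtained for instance by convolution away from $\calS$ plus a small correction by a globally smooth strictly $\om$-psh function, or by solving the smooth Monge-Amp\`ere equations as done in the conic setting of \cite{CZ12}. For each $\eps>0$, Theorem \ref{ChTh3} then provides a smooth $\eps$-geodesic $\phi_\eps$, i.e.\ a solution of the $\eps$-perturbed HCMA equation $\Om_{\phi_\eps}^{n+1}=\eps\,\Om^{n+1}$ on $X\times\Si$ with boundary data $\phi_{j,\eps}$, enjoying $C^{1,1}$ bounds (in the $X\times\Si$ directions) that depend on the smoothed boundary data but blow up as $\eps\to 0$.

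The core of the proof is to establish a weighted Laplacian estimate of the form
\begin{equation*}
\xi\cdot\bigl(n+\De_X\phi_\eps\bigr)\leq C
\end{equation*}
uniformly in $\eps$, where $C$ depends only on the ambient geometry, on $\xi$, and on the weighted bounds on $\phi_0,\phi_1$ built into the definition of $\calH_{\xi}(X)$. Following W.~He's argument underlying Theorem \ref{He-1}, I would apply the maximum principle to an auxiliary quantity of the shape $\log\xi+\log(n+\De_X\phi_\eps)-A\phi_\eps$ with $A$ sufficiently large. The computation of the linearised operator on $\log(n+\De_X\phi_\eps)$ is the standard Yau/Aubin calculation; the new ingredients are the contributions $\xi^{-1}|\nabla\xi|^2$ and $\xi^{-1}\De\xi$ coming from $\log\xi$, which by the \emph{admissibility} of $\xi$ are controlled in terms of $\xi$ itself and can therefore be absorbed into the negative terms produced by $-A\phi_\eps$. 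The uniform $C^0$ bound for $\phi_\eps$ and the bound on $|\del_t\phi_\eps|$ follow from Berndtsson's comparison framework \cite{Be11} together with the uniform $L^{\infty}$ control on the boundary data.

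With the weighted Laplacian estimate in hand, one extracts a subsequential limit $\phi=\lim_{\eps\to 0}\phi_\eps$. On compact subsets of $(X\setminus\calS)\times\Si$ the convergence is in $C^{1,\al}$ by Arzel\`a--Ascoli, so Bedford--Taylor continuity of the Monge-Amp\`ere operator ensures that $\Om_{\phi}^{n+1}=0$ holds weakly there. Global H\"older continuity of $\phi$ on $X\times\Si$ follows from the uniform $L^\infty$ bound and the maximum principle against explicit barriers built from $\xi$, in the spirit of the Ko{\l}odziej-type argument used in the conic setting of \cite{CZ12}. For uniqueness, I would adapt Berndtsson's argument in \cite{Be11}: given two candidate weak geodesics with the same boundary data, their pointwise maximum is again $\ta$-subharmonic and admits an $S^1$-invariant representative, so the domination principle for Monge-Amp\`ere equations with bounded potentials, applied on the complement of $\calS$ and extended across using the locally uniform Laplacian bound, forces the two candidates to coincide.

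The main obstacle I anticipate is the weighted Laplacian estimate itself. The admissibility hypothesis on $\xi$ must be strong enough that (i) $\xi^{-1}|\nabla\xi|^2$ and $\xi^{-1}\De\xi$ are controlled in terms of $\xi$, so that the perturbation $\log\xi$ in the test function does not introduce uncontrollable contributions, and (ii) the auxiliary quantity attains its supremum at an interior point of $(X\setminus\calS)\times\Si$ rather than drifting into $\calS$; this is precisely the role of the growth rate of $\xi$ at the singularity, and is the analogue of the barrier used to keep the maximum away from the divisor in the conic case of Theorem \ref{Th-1.3}. Once this estimate is in place, the approximation step, the passage to the limit, and the uniqueness argument are essentially mechanical adaptations of the corresponding parts of \cite{Ch00}, \cite{He12}, and \cite{Be11}.
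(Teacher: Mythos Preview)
Your proposal is essentially correct and follows the same route as the paper: approximate the boundary data by smooth potentials, solve a perturbed equation for each $\eps>0$, prove a weighted Laplacian estimate via the maximum principle applied to a test function of the form $\log\xi+\log(n+\De\phi)-A\phi$, and pass to the limit; uniqueness is quoted from the literature.

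Two points of comparison are worth recording. First, the paper inserts the weight into the \emph{equation} as well: instead of $\Om_\phi^{n+1}=\eps\,\Om^{n+1}$ it solves $\Om_\phi^{n+1}=\eps\,e^f\,\xi_\eta^{-1}\,\Om^{n+1}$ with a regularised weight $\xi_\eta=\xi+\eta$. Differentiating the logarithm of this equation produces an extra term $-\De\log\xi_\eta/(n+\De\phi)$, which is then partially cancelled against $\fD\log\xi_\eta$ when one adds $\log\xi_\eta$ to the test function. Your version, with $\xi$ only in the test function and not in the right-hand side, avoids this cancellation step and is in that sense cleaner; both lead to the same weighted bound.

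Second, your description of what admissibility buys is slightly off. The hypothesis is a \emph{lower bound on the complex Hessian} $dd^c\log\xi\ge -C\om$, not separate control of $\xi^{-1}|\nabla\xi|^2$ and $\xi^{-1}\De\xi$. In the estimate what actually appears is $\fD\log\xi=\De_\phi\log\xi+\calG(\phi)^{-1}g_\phi^{\al\bla}g_\phi^{\ka\bbe}\phi_{t\al}\phi_{t\bbe}(\log\xi)_{\ka\bla}$, i.e.\ contractions of the Hessian of $\log\xi$ against nonnegative matrices; the lower bound on $dd^c\log\xi$ then yields $\fD\log\xi\ge -C_1\sum(1+\phi_{\la\bla})^{-1}-C_1\calG(\phi)^{-1}\sum\phi_{t\la}\phi_{t\bla}(1+\phi_{\la\bla})^{-2}$, and these are precisely the terms absorbed by $-A\,\fD\phi$. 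So the mechanism you describe is right, but the formulation should be in terms of $dd^c\log\xi$ rather than the two quantities you name.
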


\begin{proof}[Proof of Theorem \ref{Th-1.3}]
We prove the Theorem \ref{Th-1.3} by considering the following family of boundary value problems to approximate the degenerate equation \ref{ge}.

\begin{equation}
\label{eq-1.5}
\begin{cases}
\left ( \phi_{tt} - \vert \del \phi_t \vert_\phi^2 \right) \omega_\phi^n = \frac{\eps e^f}{\left( \vert s \vert^2 +\eta \right)^p} \om^n \\
\phi(x,i) = \phi(i), \text{  } i = 0, 1. 
\end{cases}
\end{equation}

and its equivalent form 
\begin{eqnarray}
\label{eq-1.6}
\begin{cases}
{  \Om_{\phi}^{n+1} \over \Om^{n+1}} =  \frac{\eps e^f}{\left( \vert s \vert^2 +\eta \right)^p}\text{  on  }  X \times \Si \\
\phi|_{\del \left( X \times R \right)} = \phi_j \text{ }, j=0,1
\end{cases}
\end{eqnarray} \newline

Uniqueness of generalised solutions  amongst bounded potentials is already known, see for example \cite{Be11}. Here and hereafter, we take \(\eta\) to be \(\eta(\eps)\) so that \(\eta \rightarrow 0\) as \(\eps \rightarrow 0\).  We shall, however, drop the explicit dependence. We remark whenever required as to how this dependence may be chosen so that the estimates will hold. In order to prove existence of solutions to this equation, we will need to prove \emph{a priori} estimates for the second derivative of the potential. These estimates are stated in Theorem \ref{Th-1.4}. Since the second derivative will blow up at a certain rate, depending on the cone angle, close to the divisor, we need to prove that the rate of blowing up is bounded close to the divisor.  In particular, we prove that on any compact set not intersecting \(\calS\), the gradient, \( \Vert \nabla \phi \Vert \), and the laplacian, \(\De \phi \), are uniformly bounded. 

More precisely, let \(\phi^k_i\), for \(i=0,1\), be a sequence of smooth potentials that approximate the boundary data in the following sense: on any compact set \(K\) that does not intersect the singularity, we let \(\phi^k_i \rightarrow \phi_i\) in \(C^{1,\m}\) in such a way that \(\De \phi_i^k\) is preserved uniformly bounded. If \(\be > {1 \over 2}\), then keep \( \Vert \nabla \phi_i^k \Vert \) uniformly bounded as well, and if \(\be \leq {1 \over 2}\), that is, the boundary data is merely H\"older continuous of exponent \(2\be\) across the divisor, keep the \(2\be\)-H\"older norm bounded across the singular set. 
Also, choose \(\eps\) to be \({1 \over k}\) and choose \(\eta\) accordingly as it is allowed for the estimates to hold. ‌Thich has been explicitly derived for each estimate. Under these conditions, one also can make the choice so that right hand side of equation \ref{eq-1.5} will tend to zero and \( \left \{ (\eps_j, \eta_j) \right\}_j \rightarrow (0,0)\). The weak solution will be the limit of the smooth solutions thus obtained once we can prove a uniform bound and a uniform modulus of continuity for these solutions.  In Section  \ref{C-0-estimates} we shall derive the \(C^0\) estimates. In Section  \ref{C-1-estimates}, uniform gradient bounds for the case of differentiable boundary data, and a rate of growth for the gradient in the case of smaller angles will be proved. The latter will be used in Section  \ref{C-alpha-estimates} to show \(\de\)-H\"older norm is bounded across the divisor for some small enough \(\de\). 

One will then have a sequence of smooth solutions \(\left \{ \phi^k \right \}_k\) for the the sequence of boundary value problems with a controlled growth of laplacian close to the divisor and controlled gradient or H\"older norm across the divisor, and with a uniform bound on the time derivative. Therefore, one can extract a subsequence \(\left \{ \phi^{k_m} \right \}_m \) that will converge in \(C^\ga\) for some small enough \(\ga\), to the generalised solution \(\phi\). Therefore, in the generalised sense, \(\phi\) will also satisfy the growth conditions on laplacian and will be of class \(C^\ga\). In particular, the convergence will be in \(C^{1, \m}\) on compact sets that do not intersect the singularity.
\end{proof}

\begin{proof}[Proof of Theorem \ref{Th-1.8}]
The uniqueness is already known from \cite{Ch00}. In order to prove \ref{Th-1.8}, we shall need to prove that that the \(\calH_{\xi}\)-norm of the solutions in the following continuity family are uniformly bounded.

Similar to the case of divisorial conical singularities, we are going to prove existence of solutions to the equation \ref{ge} in an appropriate sense. We state the following generalisation of theorem \ref{Th-1.3} for the the following family of boundary value problems, where \(\eta>0\) is chosen depending on \(\eps>0\).
\begin{eqnarray}
\label{eq-1.8}
\begin{cases}
{\omega_\phi^n \over \om^n}
\left ( \phi_{tt} - \vert d \phi_t \vert_\phi^2 \right) 
=
\frac{\eps e^f}{\prod_j \xi_{j,\eta}}
 \\
\phi(x,i) = \phi(i), \text{  } i = 0, 1. 
\end{cases}
\end{eqnarray}

Here, as in the equation \ref{eq-1.5}, \(\eta\) and \(\eps\) are two parameters, but we shall see how \(\eta\) may be chosen depending on \(\eps\) in order for the estimates to hold. The content of the following theorem is the required bounds for this equation.
\end{proof}

\begin{theorem}
\label{Th-1.4}
In the family of boundary value problems \ref{eq-1.5}, assume that the boundary conditions have conical singularities of angles \(\be_j\) along \(V_j\). Then, for any \(\eps>0\), the solution \(\phi_{\eta}\) of \ref{eq-1.5} 

\begin{itemize}
\item if for all \(\be_j\) we have \(\be_j > {1 \over 2}\), then \( \vert \del_t \phi \vert + \vert \phi \vert + \vert \nabla \phi \vert + \xi |\De \phi| < C\),
\item if, for some \(j\) we have \(\be_j \leq {1 \over 2}\), then 
\( \vert \del_t \phi \vert + \vert \phi \vert + \xi |\De \phi| + \Vert \phi \Vert_{C^{0,\de}} < C\) for any \(\de < 2 \be\).
\end{itemize}
In the expressions above, \(C\) only depends on \(\De f\) and the supremum of  \(\vert \phi \vert + \xi |\De \phi|\) on the boundary. 
\end{theorem}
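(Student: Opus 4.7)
The plan is to derive each of the claimed bounds by applying the maximum principle to a carefully chosen test function on the cylinder $X\times\Si$, working with the equivalent Monge--Amp\`ere formulation (\ref{eq-1.6}), and keeping every constant independent of $\eta$ for each fixed $\eps>0$. For $\eta>0$ the right-hand side is smooth and bounded, so classical existence and regularity for the Dirichlet problem apply; the substance of the theorem is therefore the $\eta$-independence of the estimates, which forces the introduction of the divisor-adapted weight $\xi$.

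I would first establish the $C^0$ bound on $\phi$ and the bound on $\del_t\phi$. The former follows by comparison with super- and sub-solutions obtained by extending the boundary data affinely in $t$, plus a small $t(1-t)$ correction to handle the right-hand side. The latter follows by differentiating (\ref{eq-1.6}) in $t$ (the right-hand side being independent of $t$) to produce a linear elliptic equation for $\del_t\phi$, then applying the maximum principle; the boundary values of $\del_t\phi$ at $t=0,1$ are controlled by constructing affine-in-$t$ barriers whose normal derivatives dominate that of $\phi$, using the already established $C^0$ bound together with the $S^1$-invariance of the boundary data.

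For the first-order estimates the two regimes must be separated. When every cone angle satisfies $\be_j>\tfrac12$, the boundary data are Lipschitz across each $V_j$, and a Blocki--Phong--Sturm type computation applied to $|\nabla\phi|^2 e^{-A\phi}$ — with $A$ chosen large enough to absorb the curvature terms appearing in $\De_\phi|\nabla\phi|^2$ — reduces the interior gradient bound to a boundary gradient bound, which is supplied by local barriers. When some $\be_j\leq \tfrac12$, no gradient bound can hold, and one instead propagates the $2\be$-H\"older regularity of the boundary data into the interior: for any $\de<2\be$, the maximum principle applied to
\[
W(x,y,t)\;=\;\phi(x,t)-\phi(y,t)-K\,\rho(x,y)^\de
\]
on $X\times X\times[0,1]$, with $\rho$ a smoothed distance function and $K$ large depending on the boundary H\"older norm, yields a uniform $C^{0,\de}$-bound on $\phi(\cdot,t)$ across the singularity.

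The main obstacle is the weighted Laplacian estimate $\xi|\De\phi|<C$, which is where the modification of W.~He's argument is decisive. I would apply the maximum principle to the test function
\[
F\;=\;\log(n+\De\phi)-A\phi+q\log\bigl(|s|^2+\eta\bigr),
\]
with $A,q>0$ to be determined, and compute $\De_\phi F$ at an interior maximum using Yau's Schwarz-lemma inequality. This produces the usual positive term of Yau type together with curvature cross-terms absorbed by $-A\De_\phi\phi$. The new ingredient is the contribution $q\De_\phi\log(|s|^2+\eta)$: written out, it contains a strictly negative piece of order $1/(|s|^2+\eta)$ near the divisor which, when $q$ is placed in an appropriate range depending on $p$ and the cone angles $\be_j$, absorbs the singular term $-p\De_\phi\log(|s|^2+\eta)$ arising from the logarithmic derivative of the right-hand side of (\ref{eq-1.5}). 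At an interior maximum one then obtains $(n+\De\phi)(x^*)\leq C(|s|^2+\eta)^{-q}$, which, upon defining $\xi$ as the positive power of $|s|^2$ compatible with the $\{\be_j\}$, gives $\xi(n+\De\phi)\leq C$. The boundary contribution to the maximum of $F$ is controlled by the assumed supremum of $|\phi|+\xi|\De\phi|$ on $\del(X\times\Si)$, so the final constant depends only on $\De f$ and the boundary data, as claimed.
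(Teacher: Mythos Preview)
Your test function for the weighted Laplacian estimate is the correct one, but the mechanism you describe is not what actually happens, and if you tried to execute it as written you would get stuck. The form $dd^c\log(|s|^2+\eta)$ has no ``strictly negative piece of order $1/(|s|^2+\eta)$''; by Poincar\'e--Lelong its singular part is \emph{positive} (an approximation of the current of integration $[V]$), while the bounded part is $-c_1(L,h)$. The point is rather this: in the linearised inequality the term coming from the right-hand side is $-p\,\De\log\zeta_\eta/(n+\De\phi)$ (background Laplacian, not $\De_\phi$), and the term coming from the weight in the test function is $+q\,\De_\phi\log\zeta_\eta$. One must take $q=p$ exactly, so that these combine into $p\sum_\lambda\bigl(\tfrac{1}{1+\phi_{\lambda\bar\lambda}}-\tfrac{1}{n+\De\phi}\bigr)(\log\zeta_\eta)_{\lambda\bar\lambda}$, where every coefficient is \emph{nonnegative}; then the uniform lower bound $dd^c\log(|s|^2+\eta)\geq -C_1\om$ (equivalently $(\log\zeta_\eta)_{\lambda\bar\lambda}\geq -C_1$) turns this into a harmless multiple of $\sum 1/(1+\phi_{\lambda\bar\lambda})$, absorbed by the $-A\phi$ term. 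Two further points: the operator to which the maximum principle applies is the full linearisation $\fD$ of the left side of (\ref{eq-1.6}) on $X\times\Si$, not the spatial $\De_\phi$; the extra time-direction pieces involving $\phi_{t\la}\phi_{t\bar\la}/\calG(\phi)$ require the same treatment and are again controlled by the lower bound on $(\log\zeta_\eta)_{\lambda\bar\lambda}$. Finally, you also need to add $t^2/2$ to $F$ so as to produce $1/\calG(\phi)$ in the differential inequality; without it the arithmetic--geometric mean step does not close, since the product $\prod(1+\phi_{\la\bar\la})\cdot\calG(\phi)$ is precisely $\eps e^f/\zeta_\eta^p$.

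Your ordering of the estimates is the reverse of what works cleanly here, and this matters. The weighted Laplacian estimate depends only on $\|\phi\|_\infty$ and $\De f$, so it should come first. Once $\xi(n+\De\phi)\leq C$ is in hand, the gradient bound (for $\be_j>\tfrac12$) and the $C^{0,\de}$ bound (for $\be_j\leq\tfrac12$) are obtained by \emph{integrating} the growth rate $|\De\phi|\lesssim|s|^{2\be-2}$ along paths transverse to the divisor, via elementary one-variable lemmas about how H\"older/Lipschitz moduli propagate from a blow-up rate of the derivative. Your proposed direct routes --- a B\l ocki-type maximum principle for $|\nabla\phi|^2 e^{-A\phi}$, and a doubling argument for $W(x,y,t)$ --- would at the interior maximum invoke the equation and hence the singular right-hand side $\eps e^f/(|s|^2+\eta)^p$, whose relevant norms blow up as $\eta\to 0$; so neither would yield $\eta$-independent constants without first having the weighted second-order control.
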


 \begin{remark}
 Note that this theorem does not guarantee that the conical singularity is preserved, but rather, it proves that the growth of the derivatives at any point is not worse than that of the boundary condition, namely the conical case. As a result, away from the singularity of the boundary data we have the usual  bounds on the \(\del \bdel\)-derivatives.
\end{remark}

\begin{remark}
Since \(C\) only depends on the  \(\calH_{\xi}\)-norm of  boundary data, we may choose the boundary condition to be K\"ahler metrics that are semi-definite.
\end{remark}

\begin{remark}
  In the  the theorem above, we could have stated the theorem in the general case, without differentiating between smaller and larger angles. Namely, we could have used the second estimate for the gradient for both larger and smaller angles. 
\end{remark}

\section{Conical Metrics and more general singularities}
\label{not-def}
In this section, basic facts and definitions will be presented. Most of these observations are in some way proved in \cite{JMR11,Gu12}.

  Let $X^n$ be a compact K\"ahler manifold of dimension $n$ and 
When we talk about a metric with a cone of angle $\be$ along a subvariety we mean a metric whose local model is the following metric on \(\bbC^n\) with a cone of angle \(\be\) along the divisor \( [\fz_1=0]\).
\[
\om_{model}
=
{i \over 2}  |\fz_1|^{2\be-2} d \fz_1 \wge d \bfz_1
 +
{i \over 2} \sum_{j=2} d \fz_j \wge d \bfz_j
\]
After an appropriate -singular- change of coordinates, one can see that this model metric indeed represents a  euclidean cone of total angle $\tta = 2 \pi \be$, whose model on \(\bbR^2\) is the following metric:  \(d\tta^2 + \be^2 dr^2\).
By the assumption on the asymptotic behaviour we we mean there exists some coordinate chart in which the zero-th order asymptotic of the metric agrees with the model metric. In other words, there is a constant \(C\), such that 
\[
{1 \over C} \om_{model}
\leq
\om
\leq
C \om_{model}
\]

This asymptotic behaviour of metrics can be translated to the second order asymptotic behaviour of their potentials.

Of particular interest is the case where the conical singularity occurs along a divisor. That is, assume that \(V \subset X\) is a smooth complex hypersurface. Also, let \( (L,h) \) be the line bundle associated to this hypersurface endowed with some hermitian metric \(h\), and let \( s \in \calO(L) \) be the defining section of \(V\). We can assume that there are multiple hypersurfaces. Unless stated otherwise, we assume that all the hypersurfaces are smooth and that they do not intersect. In this case, we may observe that in the proofs we can consider the weight functions and hypersurfaces individually.     Let us set the following notation for the rest of this article. Let \( (L_j, h_j)\) be holomorphic line bundles endowed with hermitian metrics \(h_j\). We then refer by \(s_j\) to some -global- holomorphic section of \(L_j\), an element of \(H^0(X,\calO_X(L_j))\). Also, let \(D:=\cup V_j\).

\begin{lemma}
Let \(s_j\) and \( (L_j,h_j) \) be as before. Then, for sufficiently small \(c\), the following \( (1,1) \)-form
\begin{equation}
\om_{\be}
:=
\om
+
c \sum_j dd^c |s_j|_{h_j}^{2  \be_j}
\end{equation}

defines a k\"ahlerian metric with conical singularities of angle \(\be_j\) along \(V_j\).
\end{lemma}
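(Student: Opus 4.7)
The plan is to establish the lemma by a local computation near each divisor $V_j$ together with a global positivity argument, using that the $V_j$ are smooth and pairwise disjoint. First I would localize near a point $p \in V_j$: choose holomorphic coordinates $(z_1,\ldots,z_n)$ on a neighborhood $U$ with $V_j \cap U = \{z_1 = 0\}$, and a trivialization of $L_j$ identifying $s_j$ with the holomorphic function $z_1$, so that $|s_j|_{h_j}^2 = e^{-\vfi_j}|z_1|^2$ for a smooth local weight $\vfi_j$. Applying the Leibniz rule for $\del\bdel$ to $e^{-\be_j\vfi_j}|z_1|^{2\be_j}$ gives
\[
dd^c |s_j|_{h_j}^{2\be_j} \;=\; \be_j^2\, e^{-\be_j\vfi_j}\, |z_1|^{2\be_j-2}\, dz_1 \wge d\bar z_1 \;+\; \scrR_j,
\]
where $\scrR_j$ collects cross terms of order $|z_1|^{2\be_j-1}$ (coefficients being smooth first derivatives of $\vfi_j$) together with a pure curvature term $-\be_j |s_j|_{h_j}^{2\be_j} dd^c \vfi_j$ of order $|z_1|^{2\be_j}$. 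The leading term is a non-negative $(1,1)$-form whose blow-up rate matches precisely the singular normal part of the model metric $\om_{model}$.

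Next I would compare with the model and check positivity simultaneously. On $U$ the background $\om$ is smooth and strictly positive, so its tangential block in $(z_2,\ldots,z_n)$ is bounded above and below, supplying the tangential half of the asserted bi-Lipschitz equivalence $C^{-1}\om_{model} \leq \om_\be \leq C\, \om_{model}$. The normal block is, up to bounded smooth factors, of the form $\bigl(1 + c\be_j^2 e^{-\be_j\vfi_j}|z_1|^{2\be_j-2}\bigr)\, dz_1 \wge d\bar z_1$, which is comparable to the normal block $|z_1|^{2\be_j-2}\, dz_1\wge d\bar z_1$ of $\om_{model}$. For the mixed $(dz_1, dz_k)$-blocks, the cross terms in $\scrR_j$ have strictly weaker rate $|z_1|^{2\be_j-1}$, so a Cauchy--Schwarz estimate on the associated $2 \times 2$ Hermitian blocks shows that the leading normal term dominates them near $V_j$. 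Since the $V_j$ are disjoint, pick pairwise disjoint tubular neighborhoods $U_j$ where this analysis applies individually; on the complement $X \setminus \bigcup_j U_j$ each form $dd^c |s_j|_{h_j}^{2\be_j}$ is smooth and bounded, so by compactness of $X$ a single small $c > 0$ ensures $\om_\be \geq \tfrac{1}{2}\om$ there, which completes positivity.

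The main obstacle is the positivity argument inside the tubular neighborhoods, specifically controlling the sign-indefinite remainder $\scrR_j$ against the leading positive singular term. This is where the ordering of blow-up rates $|z_1|^{2\be_j-2}$ versus $|z_1|^{2\be_j-1}$ is crucial: irrespective of the sign or magnitude of the Chern curvature of $h_j$, on a sufficiently thin tube about $V_j$ the leading term wins, and off this tube the smallness of $c$ absorbs the curvature contribution into $\om$. The disjointness hypothesis is used to localize these estimates and avoid any cross-interactions between the different singular loci.
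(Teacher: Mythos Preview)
Your proof is correct and follows essentially the same approach as the paper: use disjointness of the $V_j$ to localize near each divisor, trivialize the line bundle in adapted coordinates with $V_j = \{z_1 = 0\}$, and decompose $dd^c |s_j|_{h_j}^{2\be_j}$ into a leading conical piece plus a lower-order remainder. Your version is in fact considerably more detailed than the paper's---the paper simply asserts the decomposition into ``a smooth part and a conical part'' and does not spell out the positivity argument or the role of the smallness of $c$, all of which you handle carefully.
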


\begin{proof}
Since the divisors do not intersect, we can consider them individually. We shall therefore drop the subscript in what follows. Adopt a coordinate system in a neighbourhood so that in this coordinate system the divisor corresponds to \([ \fz_1 = 0]\). Also, choose unit vector \(e\) in that neighbourhood for \((L,h)\). Then, we shall have that \( s = \si(\fz) e\) for some holomorphic function, and further, that \(|s|_h=|\si|\). Now, by differentiating in local coordinates, one observes that \(dd^c |s|_h^{2\be}\) can be decomposed into a smooth part and a conical part.
\end{proof}

It can be then observed that if we set a smooth metric $\om$ in the background and if we let  \( \om_\phi = \om +  d\dc \phi \) be a conical metric, then, close to the singular set, the laplacian of the potential with respect to the reference metric \(\om\), \( \Delta \phi \), grows at the rate of \( |s|_h^{ 2\be-2} \). That is, \( \Delta \phi = O(|s|_h^{ 2\be -2 })\). Similarly, we have about the first derivative that \( |\nabla \phi| = O(|s|_h^{2\be - 1}) \).

The advantage of using \( |s|_h\) is that it is a global function and has an intrinsic geometric meaning. One can, however, observe that as long as the hypersurface is smooth, close to the hypersurface, we could substitute \(|s|_h\) with the distance function to the divisor, call it \(\rho_D(x)\). Since the distance function is not smooth farther from the support of the divisor, we can define \(\rho_D\) to be the distance, with respect to the reference metric \(\om\), to the support of the divisor in the vicinity of \( \supp{D}\) and extend it smoothly to the rest of \(X\). This family of distance functions will be used when we consider more general singularities. We can, therefore, state the growth rate of derivatives in terms of \(\rho_D\) as well.

\begin{definition}
\label{admiss}
Assume that \(\calS\), the singular set, is a subset of the manifold \(X\). A function \(\xi\) is called to an \emph{admissible defining function} or \emph{admissible weight function} for the set \(\calS\) if the following conditions are satisfied:
\begin{itemize}
\item the function \(\xi\) is an exhaustion function for \(\calS\), that is it vanishes on,  and only on \(\calS\) with precompact sub-level sets,

\item the complex hessian, with respect to the reference metric \(\om\), of \(\log \xi\) is uniformly bounded from below on \(X - \calS\).
\end{itemize}
\end{definition}

In other words, we require that the mixed derivaives of \(\log \xi\) be currents bounded from below by some multiple \(-C\) of the K\"ahler form. The following observation provides us with two important families of weight functions.

\begin{lemma}
\label{lemma-3.3}
Let \(V \subset X\) be a complex hypersurface.

\begin{sub}[a] Assume that \(\rho_V\) is a function equal to the distance to \(V\) in a tubular neighbourhood of \(V\) and extended smoothly on the rather points. Then, any positive power of the distance function to \(V\), \(\rho_V^\nu\) for some \(\nu>0\), is an admissible weight function.

Further, if \(V_j \subset X\) are hypersurfaces and \(\rho_{V_j}\)'s are the corresponding weight functions, the product \(\prod \rho_{V_j}\) is also an admissible weight function for \(\cup V_j\).
\end{sub}

\begin{sub}[b] 
Assume that \((L,h)\) is a hermitian holomorphic line bundle and \(s \in H^0(X, \calO_X(L))\) is the defining section of \(V\). Then, \(|s|_h^\nu\) with \(\nu>0\) is an admissible weight function.
\end{sub}

\begin{sub}[c]
More generally,  any analytic set admits admissible weight functions. More specifically, assume that the set \(\calS\) is the common zero locus of the holomorphic functions \(f_1, . . ., f_N\). Then, \(\xi\) may be taken to be any power of \(\sum_{j=1}^N \vert f_j(\fz) \vert^2 \) is an admissible weight function.
\end{sub}

\end{lemma}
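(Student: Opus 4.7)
The plan is to verify, for each of the three families of candidates, the two conditions of Definition \ref{admiss}: that $\xi$ is a continuous exhaustion of $\calS$ with precompact sublevel sets, and that $dd^c \log \xi \geq -C\om$ on $X \setminus \calS$. Since $X$ is compact and each candidate $\xi$ is continuous with zero set exactly $\calS$, the exhaustion condition is automatic. The substantive work is the lower bound on $dd^c \log \xi$; I would establish (b) first, reduce (a) to (b), and handle (c) separately via a plurisubharmonicity argument.

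For part (b), on the open set $X \setminus V$ the section $s$ is nowhere vanishing, so $\log |s|_h^2$ is smooth. Working in a local holomorphic trivialization of $L$ in which $h = e^{-\varphi}$ for a smooth weight $\varphi$, the Poincar\'e--Lelong calculation yields
\begin{equation*}
dd^c \log |s|_h^2 = -dd^c \varphi = -\Theta_h,
\end{equation*}
where $\Theta_h$ is the Chern curvature form of $(L,h)$, a smooth $(1,1)$-form on all of $X$. Consequently $dd^c \log |s|_h^\nu = -\tfrac{\nu}{2}\Theta_h$ is globally bounded, hence bounded below by $-C\om$ for some $C = C(\nu, \Theta_h, \om)$.

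For part (a), the product assertion is immediate by linearity of $dd^c \log$: if each $\rho_{V_j}^{\nu}$ satisfies the lower bound, the same holds for $\prod_j \rho_{V_j}^{\nu}$ after summing the individual constants. It therefore suffices to handle a single smooth hypersurface $V$. Take $L = \calO_X(V)$ with any smooth hermitian metric $h$ and defining section $s$. Since $V$ is smooth and $\om$ is smooth, the ratio $\rho_V / |s|_h$ extends to a smooth strictly positive function on a tubular neighborhood of $V$, while off that neighborhood both factors are smooth and positive after the smooth extension of $\rho_V$. Hence $\log(\rho_V / |s|_h)$ is smooth on $X \setminus V$ with $dd^c$ uniformly bounded, and adding it to the estimate of (b) yields the desired bound on $dd^c \log \rho_V^\nu$.

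For part (c), the key point is that $\log u$ is plurisubharmonic on $X \setminus \calS$ for $u := \sum_j |f_j|^2$ when the $f_j$ are holomorphic. A direct computation gives
\begin{equation*}
\del \bdel \log u = \frac{1}{u^2}\Bigl( u \sum_j \del f_j \wge \overline{\del f_j} - \sum_{j,k} \bar f_j f_k\, \del f_j \wge \overline{\del f_k} \Bigr),
\end{equation*}
and contraction with a tangent vector $v \otimes \bar v$ reduces the nonnegativity to the Cauchy--Schwarz inequality applied to the vectors $(f_j)_j$ and $(\del f_j \cdot v)_j$. Hence $dd^c \log \xi = \nu\, dd^c \log u \geq 0 \geq -\om$ whenever $\nu > 0$, and the admissibility bound holds trivially. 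The principal delicacy here is that the $f_j$ are a priori defined only on a coordinate chart about a point of $\calS$; I would address this by constructing $\xi$ from the local formula on a neighborhood $U \supset \calS$ and extending it smoothly and positively to the rest of $X$, noting that the curvature lower bound is preserved under a smooth compactly supported modification outside a neighborhood of $\calS$.
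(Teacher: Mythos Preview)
Your argument supplies far more detail than the paper, whose proof reads in its entirety: ``The fact that admissibility of weight functions is preserved under multiplication follows from its definition. The proofs for other claims follow from calculations in local coordinate systems around the submanifolds.'' Your treatment of (b) via the Chern curvature identity $dd^c\log|s|_h^2 = -\Theta_h$ is precisely what the paper invokes later (around inequality~\eqref{6/1}), and your Cauchy--Schwarz argument for (c) is the standard one; both are correct and in the same spirit as the paper's intended ``local calculations.''

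The one step that deserves more care is in (a): the assertion that $\rho_V/|s|_h$ extends \emph{smoothly} across $V$ is not automatic. Both $\rho_V^2$ and $|s|_h^2$ are smooth and vanish to second order along $V$, but a smooth nonnegative function vanishing to second order on a real-codimension-two set need not be a smooth multiple of $|z_1|^2$ (consider $|z_1|^2 + x_1^3$, whose quotient by $|z_1|^2$ fails to be $C^1$). What you actually require is only that $dd^c\log(\rho_V/|s|_h)$ be bounded below near $V$. For this it suffices to know that the quadratic normal jet of $\rho_V^2$ along $V$ is a positive scalar multiple of $|z_1|^2$ --- which holds because the Hessian of $\rho_V^2$ at points of $V$, restricted to normal directions, equals twice the Hermitian normal metric and hence carries no $z_1^2$ or $\bar z_1^2$ component --- together with control of the remainder. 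Passing to Fermi normal coordinates, in which $\rho_V^2$ is exactly the Euclidean squared norm of the normal coordinate, makes this transparent. This is a refinement of your argument rather than a change of strategy.
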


\begin{proof}
The fact that admissibility of weight functions is preserved under multiplication follows from its definition. The proofs for other claims follow from calculations in local coordinate systems around the submanifolds.
\end{proof}

It will make some of statements clearer if we introduce certain weighted functional spaces. As before, let $\xi$ be a weight function for the gradient and the laplacian of the potential measured with respect to the smooth background metric \(\om\). Since we only use continuous potentials, we have not put a weight on the growth of the \(C^0\). Then, we define the following spaces:

\begin{definition}
Let \(\xi\) be an admissible weight function. We say that a continuous potential \(\phi\) belongs to the space \(\calH_{\xi}\) if  \(\om + dd^c \phi \geq 0\) and it further satisfies \( \Vert \phi \Vert_{\xi}:=|\phi|+ \vert \Delta \phi \vert \xi < \infty \). We may also speak of an admissible pair \((\calS, \xi)\) consisting of the singular set and an admissible weight function.
\end{definition}

\begin{theorem}
\label{Th-1.12}
Assume that in the boundary value problem \ref{eq-1.8}, the boundary data, \(\phi_0\) and \(\phi_1\) belong to the weighted space \(\calH_{\xi}\) for some admissible pair \((\calS,\xi)\). Then, for any \(\eps>0\), the  \(C^{1,1}\) norm of the solution on any compact subset away from \(\calS\) is bounded independent of \(\eta\). More precisely, for any \(\eps>0\), we have the following bound independent of \(\eta\).
\begin{equation}
|\del_t \phi|
+
\xi \vert \De \phi \vert < C
\end{equation}
for any \(\m \in (0,1)\).
\end{theorem}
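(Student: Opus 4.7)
The plan is to adapt the proof of Theorem \ref{Th-1.4} to the general setting of an admissible pair $(\calS,\xi)$, replacing the explicit computations available for the conical model $\xi=|s|_h^{2\be}$ with arguments that invoke only the two defining properties of admissibility: the exhaustion condition, and the quasi-plurisubharmonicity $dd^c\log\xi\ge -C_0\,\om$. First I would establish the $C^0$-bound on $\phi$ from the comparison principle applied to \ref{eq-1.8}, with affine-in-$t$ barriers built from $\max(\phi_0,\phi_1)\pm c\,t(1-t)$; this uses only that $\phi_j\in L^\infty$, which is part of the hypothesis $\phi_j\in\calH_\xi$. The bound on $|\del_t\phi|$ then follows from the plurisubharmonicity of $\phi$ in the complexified time variable $\ta$ together with the $S^1$-invariance of the boundary data, exactly as in the smooth case of \cite{Ch00}; neither of these two steps involves $\xi$ in any way.

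The essential step is the weighted Laplacian estimate, which I would obtain by applying the maximum principle on $X\times\Si$ to a test function of the form
\begin{equation*}
H \;=\; \log\bigl(n+1+\De_\Om\phi\bigr)\;+\;\sum_j \log\xi_{j,\eta}\;-\;A\,\phi,
\end{equation*}
where $A$ is a large constant to be chosen. At an interior maximum, the standard Aubin--Yau--Siu--Tosatti computation, combined with the identity $\De_{\Om_\phi}\phi = n+1-\operatorname{tr}_{\Om_\phi}\Om$, yields an inequality of the shape
\begin{equation*}
0 \;\ge\; (A-C_1)\operatorname{tr}_{\Om_\phi}\Om \;+\;\frac{\De_\Om\log F_\eta}{n+1+\De_\Om\phi}\;+\;\sum_j \De_{\Om_\phi}\log\xi_{j,\eta}\;-\;A(n+1),
\end{equation*}
where $F_\eta=\eps e^f/\prod_j\xi_{j,\eta}$ is the right-hand side of \ref{eq-1.8}. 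The decisive cancellation happens between the singular contribution $-\sum_j\De_\Om\log\xi_{j,\eta}$ buried inside $\De_\Om\log F_\eta$ and the paired spacetime term $\sum_j\De_{\Om_\phi}\log\xi_{j,\eta}$ coming from $H$ itself; invoking the admissibility estimate $dd^c\log\xi_{j,\eta}\ge -C_0\,\om$ then replaces both by a single multiple of $\operatorname{tr}_{\Om_\phi}\Om$, removing every $\eta$-dependent divergence from the inequality. Choosing $A$ large enough forces $\operatorname{tr}_{\Om_\phi}\Om$ to be bounded at the maximum, and combining this with the Monge--Amp\`ere equation through an arithmetic--geometric inequality on the eigenvalues produces an upper bound on $\bigl(\prod_j\xi_{j,\eta}\bigr)\,(n+1+\De_\Om\phi)$ at that point. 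Since the hypothesis $\phi_j\in\calH_\xi$ yields $\xi|\De\phi_j|<\infty$ on $X\times\del\Si$, the test function $H$ is bounded above on the boundary as well, and combining boundary and interior bounds gives $\xi\,|\De_\Om\phi|\le C$ globally. Subtracting the time-direction contribution to $\De_\Om\phi$, which is controlled by $|\del_t\phi|$ and the Monge--Amp\`ere equation, converts this into the advertised spatial bound $\xi|\De\phi|\le C$.

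The main obstacle I expect is ensuring that the cancellation described above is exact and produces constants independent of the regularization parameter $\eta$. This requires the admissibility inequality $dd^c\log\xi_{j,\eta}\ge -C_0\,\om$ to hold with a constant $C_0$ uniform in $\eta$, which for the canonical smoothings $\log(|s|_h^2+\eta)$ and $\log\bigl(\sum_j|f_j|^2+\eta\bigr)$ provided by Lemma \ref{lemma-3.3} must be verified by a direct coordinate computation. A secondary technical point is that the test function $H$ is written in terms of the explicit product $\prod_j\xi_{j,\eta}$ rather than the intrinsic admissible weight $\xi$, so once the maximum-principle estimate is obtained one has to translate it into a bound involving $\xi$ via the definition of admissibility. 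When these issues are resolved, the theorem follows because the constants appearing in the resulting estimate depend only on $\De f$, the $\calH_\xi$-norm of the boundary data, and the uniform admissibility constant, none of which involve $\eta$.
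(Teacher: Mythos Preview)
Your proposal is correct and reproduces the strategy the paper declares for Theorem~\ref{Th-1.12}, namely to rerun the proof of Theorem~\ref{Th-1.4} \emph{mutatis mutandis}: the $C^0$ bound via barriers, the $|\del_t\phi|$ bound via convexity in $t$ (which the paper attributes to Berndtsson~\cite{Be11} for bounded potentials rather than to~\cite{Ch00}), and the weighted Laplacian estimate by adding $\log\xi_\eta$ to the test function so that the admissibility hypothesis $dd^c\log\xi_\eta\ge -C_0\,\om$ absorbs the singular contributions uniformly in $\eta$.

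The one genuine difference is in the packaging of the Laplacian estimate. You work with the full space--time Laplacian $\De_{\Om}\phi$ and the standard Aubin--Yau inequality on the product $X\times\Si$, applying the maximum principle with $\De_{\Om_\phi}$. The paper instead follows He~\cite{He12}: it uses only the \emph{spatial} Laplacian $n+\De\phi$, the linearized operator $\fD$ of~\ref{linearisation} which keeps the geodesic operator $\calG(\phi)=\phi_{tt}-|d\phi_t|_\phi^2$ explicit, and adds $t^2/2$ to the test function to manufacture the missing $1/\calG(\phi)$ term before invoking the arithmetic--geometric inequality. Both routes hinge on the same cancellation, that in normal coordinates $\De_\phi\log\xi_\eta-\tfrac{\De\log\xi_\eta}{n+\De\phi}=\sum_\la\bigl(\tfrac{1}{1+\phi_{\la\bla}}-\tfrac{1}{n+\De\phi}\bigr)(\log\xi_\eta)_{\la\bla}$ has nonnegative coefficients, so the lower bound on $(\log\xi_\eta)_{\la\bla}$ suffices. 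Your formulation is conceptually tidier; the paper's decomposition avoids having to worry about the degenerate $\ta$-direction of $\Om=\pi^*\om$ but pays for it with longer bookkeeping in the appendix.
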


Since the proof of theorem \ref{Th-1.12} is \emph{mutatis mutandis} the same as that of \ref{Th-1.4}, we shall only prove the latter.
\\

\section{\(L^\infty\) estimate}

\label{C-0-estimates}
There are various ways to see that the solutions of boundary value problem \ref{eq-1.5} are bounded. One can, for example, generalise the argument \cite{Ch00}, where  sub- and super-solutions are constructed, to the case of less regular boundary data.
\begin{proposition}
In the boundary value problem 
\begin{equation}
\begin{cases}
\frac{\Om_\phi^{m}}{\Om^{m}} =   \psi \\
\phi_{|\del(X \times \Si)} = \phi_{0,1}
\end{cases}
\end{equation}

the \(L^\infty\) norm of \(\phi\) in the interior is bounded as soon as the right hand side is square integrable, \(\psi \in L^2(X \times \Si,\Om)\).

\end{proposition}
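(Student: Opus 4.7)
The plan is to obtain matching upper and lower \(L^\infty\) bounds for \(\phi\) by comparing, on \(X\times\Si\), with barriers adapted to the \(m\)-dimensional Bedford--Taylor comparison principle, and to handle the only-\(L^2\) regularity of the right-hand side by a Ko{\l}odziej-type capacity argument applied directly to \(\phi\).

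For the upper bound I would take as supersolution the bounded weak geodesic \(\bar\phi\) of Theorem \ref{ChTh3}, solving \(\Om_{\bar\phi}^{m}=0\) with the same boundary data \(\phi_{0,1}\). Since \(\Om_\phi^{m}=\psi\,\Om^{m}\geq 0=\Om_{\bar\phi}^{m}\) in the interior and \(\phi=\bar\phi\) on \(\del(X\times\Si)\), the comparison principle gives \(\phi\leq\bar\phi\); equivalently, one can invoke the ordinary maximum principle since \(\Om+dd^c\phi\geq 0\) forces \(\De\phi\geq -C\) for a universal \(C\) depending only on \(\Om\).

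For the lower bound, when \(\psi\) is bounded one can mimic \cite{Ch00} and construct a subsolution of the form \(\underline\phi=(1-t)\phi_0+t\phi_1+At(t-1)\), choosing \(A\) large enough that its temporal convexity generates Monge-Amp\`ere mass exceeding \(\|\psi\|_{L^\infty}\), and then apply comparison in the reverse direction. For \(\psi\in L^2\), which may concentrate on sets of small volume, this pointwise construction fails, and instead I would argue directly on \(\phi\) by the capacity method: letting \(m_0=\inf_{\del}\phi\) and \(U_t=\{\phi<m_0-t\}\), one combines
\begin{equation*}
\int_{U_t}\psi\,\Om^{m}
=\int_{U_t}\Om_\phi^{m}
\leq C\,\mathrm{Cap}_\Om(U_t)
\end{equation*}
with H\"older's inequality \(\int_{U_t}\psi\le\|\psi\|_{L^2}\,\mathrm{vol}(U_t)^{1/2}\) and the comparison between Monge-Amp\`ere capacity and Lebesgue volume for \(\Om\)-psh functions. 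Iterating yields a De Giorgi-type decay of \(\mathrm{vol}(U_t)\) in \(t\), forcing \(U_t=\emptyset\) for \(t\) larger than an explicit constant depending only on \(\|\psi\|_{L^2}\) and \(\|\phi_{0,1}\|_\infty\).

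The main technical obstacle is transferring Ko{\l}odziej's capacity machinery from the closed K\"ahler setting to the manifold with boundary \(X\times\Si\), specifically the capacity-volume inequality and comparison principles up to \(\del(X\times\Si)\); this adaptation to the Dirichlet problem has been carried out by Cegrell and by Guedj--Ko{\l}odziej--Zeriahi, and since \(L^2\subset L^p\) for every \(p\in(1,2]\) we are safely in the subcritical regime where \(L^\infty\) bounds hold with explicit dependence on \(\|\psi\|_{L^p}\). Once this transfer is in place, the preceding two comparison steps close the argument.
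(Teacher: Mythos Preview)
Your proposal is correct but considerably more elaborate than the paper's argument. The paper obtains the upper bound not by invoking the weak geodesic of Theorem~\ref{ChTh3} but by solving the \emph{linear} problem \(\De h = n+1\) with the given boundary data; since \(\Om_\phi\geq 0\) forces \(\De\phi\geq -(n+1)\), the function \(\phi-h\) is subharmonic and the ordinary maximum principle applies---this is essentially your ``alternative'' remark. For the lower bound the paper simply declares that any \(\Om\)-plurisubharmonic extension of the boundary data is a subsolution, and then, crucially, remarks immediately afterward that for the estimate to hold one must choose \(\eta(\eps)\) so that the right-hand side stays \emph{uniformly bounded}. In other words the paper never actually treats the genuinely \(L^2\) case, only the bounded one, where your Chen-type barrier \((1-t)\phi_0+t\phi_1+At(t-1)\) (or equivalently a psh extension with enough Monge--Amp\`ere mass) already suffices. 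Your Ko{\l}odziej capacity argument would handle unbounded \(\psi\in L^2\), which is a genuine strengthening, but it is not needed for the paper's application and imports machinery (capacity--volume comparison on a manifold with boundary) that the paper avoids entirely. One small caution on your side: your primary upper-bound barrier invokes Theorem~\ref{ChTh3}, which as stated requires smooth strictly positive end-point metrics---precisely the hypothesis this section is trying to relax---so to avoid circularity you should fall back on the linear barrier, as the paper does.
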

 
\begin{proof}
Let \(h\) be a solution to the following boundary value problem:
\[
\begin{cases}
\De h = n + 1 \\
\phi_{|\del(X \times \Si)} = \phi_{0,1}
\end{cases}
\]
One can verify that \(h\) is indeed a supersolution. Also, let \(\phi_0\) be a an \(\Om\)-plurisubharmonic function on \(X \times \Si\) whose restriction to the boundary agrees with \(\phi_{0,1}\). Clearly, \(\phi_0\) is a subsolution. Therefore, \(\phi\) is bounded from above and below on \(X \times \Si\).
\end{proof}

\begin{remark}
In order for this estimate to hold, one needs to choose \(\eta(\eps)\) so that the right hand side in boundary value problem \ref{eq-1.5} stays uniformly bounded, namely, \(\eps \leq \eta^p\).
\end{remark}


\section{\(C^\al\) estimates}
\label{C-alpha-estimates}
In case of certain singularities, including the case of conical metrics, we can prove that the \(C^\al\) norm of the solutions are bounded. In case of conical metrics of angles \(\be > {1 \over 2}\), since the potential is \(C^1\), this information will be superfluous. However, in the case of smaller angles, \(\be \leq {1 \over 2}\), we shall prove that not only away from the singularity, but also across the divisor H\"older continuity of the solutions is preserved for some appropriate exponent. 

Let us first make some  observations:
\begin{lemma}
\label{lemma-4.1}
Assume that \(g :[0,1] \rightarrow \bbR\) is continuous on \((0,1]\). Further, assume that \(g\) is locally lipschitzian of constant \(\la(\ta)\) on intervals of the form \([\ta,1]\). Then, \(g\) is H\"older continuous of exponent \(\m\) on \([0,1]\) provided that the following holds:
\[
\varlimsup_{x \rightarrow 0} x^{1 - \m} \la(x) 
<
\infty
\]

\end{lemma}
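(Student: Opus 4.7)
The plan is to exploit the growth hypothesis on \(\la\) and reduce the argument to a careful dyadic decomposition near the endpoint \(0\). First I would extract from the \(\varlimsup\) hypothesis constants \(C > 0\) and \(\de \in (0,1]\) such that \(\la(\ta) \leq C \ta^{\m-1}\) for every \(\ta \in (0,\de]\). On the interval \([\de,1]\) the function \(g\) is globally Lipschitz (with constant \(\la(\de)\)), hence H\"older of exponent \(\m\), so the real work concerns pairs \(0 < x < y \leq \de\).

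For such a pair I would separate two regimes. If \(y \leq 2x\), then \([x,y] \subset [x,1]\) and the local Lipschitz bound gives
\[
|g(y) - g(x)| \leq \la(x)(y-x) \leq C\, x^{\m-1}(y-x) = C (y-x)^{\m} \left( \frac{y-x}{x} \right)^{1-\m}.
\]
Since \(y - x \leq x\) and \(1 - \m \geq 0\), the last factor is at most \(1\), yielding \(|g(y) - g(x)| \leq C (y-x)^{\m}\).

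If instead \(y > 2x\), I would form the dyadic chain \(x_i := 2^i x\) and let \(n\) be the smallest index with \(2^n x \geq y\), replacing \(x_n\) by \(y\) at the final step. On each \([x_i, x_{i+1}]\) with \(i \leq n-2\), the Lipschitz constant \(\la(x_i) \leq C (2^i x)^{\m-1}\) times the length \(2^i x\) bounds the jump by \(C\, 2^{i\m} x^{\m}\); the terminal piece \([x_{n-1}, y]\) satisfies the same type of estimate. Summing the resulting geometric series with ratio \(2^{\m}\) and using \(2^n x \leq 2y\) produces
\[
|g(y) - g(x)| \leq \frac{C \cdot 2^{\m}}{2^{\m} - 1}\, y^{\m}.
\]
Because \(y > 2x\) forces \(y - x > y/2\), this gives \(|g(y) - g(x)| \leq K (y-x)^{\m}\) for an explicit \(K\) depending only on \(C\) and \(\m\).

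Finally, to include the endpoint \(0\), the H\"older estimate just obtained shows that \(\{g(x_k)\}\) is Cauchy along any sequence \(x_k \to 0^+\), so \(g\) extends continuously to \([0,1]\) and the bound passes to the limit. I expect the main subtlety to be the dyadic summation in the second regime: the positivity of \(\m\) is precisely what allows the geometric factors \(2^{i\m}\) --- summed until \(2^i x\) first reaches \(y\) --- to be controlled by \(y^{\m}\), and this is where the \(\varlimsup\) hypothesis tied to the exponent \(\m\) does its essential work.
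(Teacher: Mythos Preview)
Your argument is correct and follows essentially the same route as the paper: the paper's proof consists solely of the hint ``decompose the interval \([0,1]\) into subintervals with end points belonging to the sequence \(\{2^{-n}\}_n\)'', and you have carried out precisely this dyadic decomposition with full details. The only minor omission is the mixed case \(0<x\le\de<y\le 1\), but that follows immediately by triangulating through \(\de\).
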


\begin{proof}
 This can be seen by decomposing the interval \([0,1]\) into subintervals with end points belonging to the sequence \(\left\{ 2^{-n} \right\}_n\).
\end{proof}

The previous lemma will allow us to prove \(\m\)-H\"older continuity in directions transversal to the divisor once we prove the upper bound on the rate of growth of laplacian. We need to prove that the \(\m\)-H\"older modulus is bounded in tangential directions as well. Knowing the rate of growth of the gradient, which is provided in the next section, combined with the following observation, we obtain uniform H\"older continuity.

\begin{lemma}
Assume that \(N^n \subset M^m\) is an immersed submanifold and let \(\rho_N\) be the distance to \(N\). Let \(f \in C^0 \left( M \right) \cap C^1 \left( M - N \right) \) be \(\m\)-H\"older continuous in directions transversal to \(N\). Further, assume that \(\nabla f\), at worst, grows at the rate of \(\rho_N^{-\nu}\) for some \(\nu\). Then, for \(\be={\m \over \m + \nu}\), one has \(f \in C^{\be} \left( M \right) \).
\end{lemma}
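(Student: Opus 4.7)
My plan is to argue by interpolation between the two hypotheses. Given $x, y \in M$ with $d := |x-y|$ small, I introduce intermediate points $x^{*}, y^{*}$ chosen so that $\rho_N(x^{*}), \rho_N(y^{*}) \geq r$ for a scale $r$ to be determined. The transversal H\"older hypothesis controls $|f(x) - f(x^{*})|$ and $|f(y) - f(y^{*})|$; the pointwise gradient bound $|\nabla f| \leq C \rho_N^{-\nu}$ controls $|f(x^{*}) - f(y^{*})|$; and $r$ is then chosen to balance the two estimates.

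First, working in a tubular neighborhood of $N$ via the normal exponential map, I move $x$ outward along the geodesic normal to $N$ passing through $x$ to produce $x^{*}$ with $\rho_N(x^{*}) = \max\{\rho_N(x), r\}$, and similarly $y^{*}$. The transversal displacement $|x - x^{*}|$ is at most $r$, so the hypothesis on $f$ gives
\[
|f(x) - f(x^{*})| \leq C r^\m, \qquad |f(y) - f(y^{*})| \leq C r^\m.
\]

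Next, I connect $x^{*}$ to $y^{*}$ by a path $\gamma$ contained entirely in $\{\rho_N \geq r/2\}$ whose length is at most $C(d + r)$: in normal-bundle coordinates one first travels along the $r$-tube so that the projection onto $N$ matches that of $y^{*}$, then moves radially to reach $y^{*}$. Along such a path the gradient hypothesis gives $|\nabla f| \leq C r^{-\nu}$, so that
\[
|f(x^{*}) - f(y^{*})| \leq C r^{-\nu}(d + r).
\]

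Combining the three pieces and choosing $r = d^{1/(\m + \nu)}$ balances the exponents, yielding
\[
|f(x) - f(y)| \leq 2C r^\m + C r^{-\nu}(d + r) \leq \tilde C\, d^{\m/(\m + \nu)} = \tilde C\, d^\be,
\]
valid in the regime $\m + \nu \leq 1$, which is precisely the one arising in the application to conical singularities (there $\m = 2\be$ and $\nu = 1 - 2\be$, so $\m + \nu = 1$). The main technical point I expect to be non-trivial is the construction of the middle path $\gamma$: it must simultaneously remain at distance of order $r$ from $N$ and have length bounded by $d + r$. For a smoothly immersed $N$ this follows from standard properties of the normal exponential map; in less regular settings the hypothesis on $N$ would need to be strengthened.
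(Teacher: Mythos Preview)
Your approach is essentially the same as the paper's: both argue by a three-segment decomposition---push $x$ and $y$ out in the normal direction to a height $r = d^{1/(\mu+\nu)}$, use the transversal H\"older hypothesis for the two radial legs, and use the gradient bound along the connecting segment at that height. The only difference is that the paper first reduces to the case $p,q \in N$, so that the middle segment has length exactly $d$ rather than $d+r$; this sidesteps the constraint $\mu+\nu \le 1$ that you flag, but otherwise the computations are identical.
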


\begin{proof}
 Since we already know H\"older continuity in the normal directions, we shall make use of it to prove H\"older continuity in the tangential direction as well. For simplicity, let \(N\) and \(M\) be \(\bbR^n\) and \(\bbR^m\) respectively. 

Since we already assume the control in the normal directions, it will be enough to show that for any two point on the submanifold, \(p,q \in \bbR^n\), \(\left \vert f(q) - f(p) \right \vert \leq C \left \Vert q -p \right \Vert^\al \). More specifically, let \(p=\left( p^1,...,p^n, 0 , ... ,0 \right) , q= \left( q^1, ... ,q^n, 0, ... ,0 \right)\) for simplicity, let us let \( r=\left \Vert q - p \right \Vert\). Choose  \(\ga = {1 \over \m + \nu} \). Also, as usual, let \(e_n\) denote the \(n\)-th element of the standard basis of \(\bbR^m\). Then, by our assumption on the rate of growth of the gradient,

\begin{eqnarray}
\vert f(p) - f(q) \vert 
&\leq&
\vert f(p + r^\ga e_n) - f(p) \vert
+
\vert f(p + r^\ga e_n) - f(q + r^\ga e_n) \vert
+
\vert f(q) - f(q + r^\ga e_n) \vert \nonumber \\
&\leq&
C_1 r^{\ga \m} + C_2 r^{-\ga \nu + 1} 
= C r^{\m \over \m + \nu}
\end{eqnarray}

which proves the claim.
\end{proof}

Now, since our estimate on the rate of growth of laplacian close to the divisor only depends on the \(L^\infty\) bound, cf. section \ref{Laplace-estimates}, we can estimate the rate of growth of the gradient, as is done in the next section. Combined with the preceding lammata, we may obtain the following proposition which can be interpreted as the continuous embedding \(\calH_\xi \hookrightarrow C^\de\).

\begin{proposition}
Let \(\phi\) be a solution of the boundary value problem \ref{eq-1.5} for some \(\eps>0\). Then, for \(\de < 2\be \), \( \Vert \phi \Vert_{0, \de} \leq C\) for some uniform \(C\) which only depends on the boundary conditions and the geometry of the reference metric \(\om\).
\end{proposition}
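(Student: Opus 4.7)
The plan is to combine the two technical lemmas (Lemma \ref{lemma-4.1} and the unnamed lemma that follows it) with the growth estimates for the gradient and laplacian of $\phi$ derived in the preceding sections. These estimates, crucially, depend only on the boundary data and on $\om$, and are uniform in $\eps$ and $\eta$, which will give the uniformity of the Hölder constant $C$.

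First, I would establish the Hölder modulus in directions \emph{transversal} to the divisor. Fix a point $p_0$ on the support of the divisor $D$ and a short geodesic ray $\gamma(r)$ emanating from $p_0$ in a direction normal to $D$, parametrised by distance $r \in [0,1]$ to $D$. Applying the gradient growth estimate $|\nabla \phi| \leq C \rho_D^{2\be-1}$ from Section \ref{C-1-estimates} along $\gamma$, the restriction $g(r) := \phi(\gamma(r))$ is locally Lipschitz on $[\tau,1]$ with Lipschitz constant $\la(\tau) = C\tau^{2\be-1}$. To apply Lemma \ref{lemma-4.1}, I verify that $\varlimsup_{x \to 0} x^{1-\m}\la(x) = C \varlimsup_{x\to 0} x^{2\be - \m}$, which is finite precisely when $\m \leq 2\be$. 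Thus $g$ is $\m$-Hölder on $[0,1]$ for any $\m < 2\be$, with constant depending only on $C$ and $\m$, hence only on the boundary data and $\om$.

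Second, I would upgrade this transversal control to Hölder continuity along all directions using the second lemma of the section. We have shown the transversal Hölder exponent may be taken to be any $\m < 2\be$, while the gradient estimate gives growth at rate $\rho_D^{-\nu}$ with $\nu = 1 - 2\be$ (in the case $\be \leq 1/2$; when $\be > 1/2$ the potential is already $C^1$ and the statement is immediate). The lemma then yields $\phi \in C^{\m/(\m+\nu)}$ on the ambient $X \times \Si$. Since $\m/(\m+\nu) = \m/(\m + 1 - 2\be) \nearrow 2\be$ as $\m \nearrow 2\be$, given any $\de < 2\be$ one can choose $\m$ close enough to $2\be$ so that $\m/(\m+\nu) > \de$. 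This gives the desired uniform $C^{0,\de}$ bound.

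The main subtlety will be to verify that the gradient growth estimate used here is truly independent of $\eps$ and $\eta$; this is precisely the content of the gradient bound in Section \ref{C-1-estimates}, where $\eta = \eta(\eps)$ is chosen to make the bound uniform. A secondary technical point is handling neighbourhoods of $D$ where the transversal direction is not canonically a single ray: one works in a tubular neighbourhood where $\rho_D$ coincides with the distance function and foliates it by geodesic segments emanating normally from $D$, applying the argument leafwise. Away from $D$ the gradient is uniformly bounded and the estimate is standard, so the global $C^{0,\de}$ bound is obtained by patching.
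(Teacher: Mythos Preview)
Your approach is essentially identical to the paper's: use Lemma~\ref{lemma-4.1} for the transversal H\"older modulus via the gradient growth, then the second lemma to upgrade to full H\"older continuity. There is one inaccuracy worth flagging: the gradient bound you invoke, $|\nabla\phi|\le C\rho_D^{2\be-1}$, is the growth rate of the \emph{boundary data}, not of the solution. What Section~\ref{C-1-estimates} actually proves for the solution is the weaker $\|\nabla\phi\|\le C\rho_D^{\m-2+2\be}$ for any $\m\in(0,1)$, so your $\nu$ in Step~2 should be $2-2\be-\m>1-2\be$ rather than exactly $1-2\be$. This does not damage the conclusion, since you already perform the right limiting argument: taking $\m\nearrow 1$ one still has $\m'/(\m'+\nu)\to 2\be$ and any $\de<2\be$ is attained.
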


\begin{remark}
The \(C^\al\) estimate is only useful across the divisor. One can easily observe that away from the divisor the solution is indeed \(C^1\).
\end{remark}

A more straightforward, nevertheless quite restrictive, approach to proving the H\"older estimates is through the \(W^{2,p}\) estimates and the embedding of Sobolev spaces into H\"older spaces which is the content of the following proposition. On complex surfaces, it provides the estimate for \(\be > {1 \over 2}\).

\begin{proposition}
Let the weight function for the laplacian, \(\xi\), satisfy the  property that \( {1 \over \xi} \in L^p\) for  some \(p > d \), where \(d\) denotes the complex dimension of the manifold \(X\). Further, assume that the continuous function \(\phi\) is a solution of boundary value problem \ref{eq-1.5} for some \(\eps>0\). Then, \(\phi \in C^\m\) for \(\m \leq 2 - {2d \over p}\). In particular, in the case of conical singularity along a divisor, it will suffice to have \(\be > 1 - {1 \over d}\).
\end{proposition}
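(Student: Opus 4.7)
The plan is to combine the weighted Laplacian estimate supplied by Theorem \ref{Th-1.4} with classical $L^p$ theory for the Laplace operator on $X$ and the Morrey--Sobolev embedding. Since the statement targets a $C^\m$ bound, the natural route is to upgrade the pointwise weighted bound $\xi\vert\De\phi\vert\le C$ to an integrable bound on $\De\phi$, then pass from $W^{2,p}$ to H\"older via Sobolev embedding.

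First, I would invoke Theorem \ref{Th-1.4} (or Theorem \ref{Th-1.12}) to write $\vert\De\phi\vert \le C\xi^{-1}$ pointwise on $X$. The hypothesis $\xi^{-1}\in L^p(X,\om^n)$ then immediately gives $\De\phi\in L^p(X)$ with a uniform bound depending only on $C$ and $\Vert\xi^{-1}\Vert_{L^p}$. Because $\phi$ is also uniformly bounded in $L^\infty$ (from Section \ref{C-0-estimates}), the standard Calder\'on--Zygmund estimate for the Laplacian on the smooth compact manifold $(X,\om)$ (applied to the smooth approximation $\phi_\eps$ obtained from \eqref{eq-1.5}, with uniform constants) yields $\Vert\phi\Vert_{W^{2,p}(X)}\le C'$, again with $C'$ independent of $\eps,\eta$ once $\eta$ is chosen compatibly.

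Second, since $p>d$ and the real dimension of $X$ is $2d$, the Morrey--Sobolev embedding
\[
W^{2,p}(X)\hookrightarrow C^{\,2-\frac{2d}{p}}(X)
\]
applies and produces the claimed H\"older exponent $\m\le 2-\tfrac{2d}{p}$, with a constant that is uniform in the approximation parameter. Passing to the limit $\eps\to0$ preserves the bound. For the final assertion in the conical case, the growth rate $\De\phi=O(\vert s\vert_h^{2\be-2})$ identifies the admissible weight as $\xi\sim\vert s\vert_h^{2(1-\be)}$; thus in a tubular neighbourhood of $V$ one computes, using that the normal real codimension is $2$,
\[
\int \xi^{-p}\,\om^n \ \sim\ \int_0^{\ep} r^{-2p(1-\be)}\, r\,\dd r,
\]
which is finite precisely when $2p(1-\be)<2$, i.e.\ $\be>1-\tfrac{1}{p}$. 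Choosing $p$ slightly larger than $d$ recovers the threshold $\be>1-\tfrac{1}{d}$.

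The main obstacle is the second step: one must verify that the Calder\'on--Zygmund estimate produces a bound on $\Vert\phi\Vert_{W^{2,p}}$ that is genuinely uniform along the approximating sequence $\phi_\eps$, even though the limit potential is singular along $\calS$. This is not entirely automatic, because the weight $\xi$ degenerates on $\calS$ and the constants in the elliptic estimate a priori could depend on the distance to $\calS$; however since the statement is for the approximating equation \eqref{eq-1.5} at fixed $\eps>0$ where $\phi$ is smooth, and since the $L^p$ norm of $\De\phi$ is controlled solely by $C\Vert\xi^{-1}\Vert_{L^p}$ which is a fixed finite quantity under the hypothesis, the Calder\'on--Zygmund constant only depends on the fixed geometry $(X,\om)$. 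This is why the conclusion is stated for fixed $\eps>0$, and why the restrictive nature of the hypothesis $\xi^{-1}\in L^p$, $p>d$, is noted in the text immediately preceding the statement.
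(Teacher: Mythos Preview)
Your proposal is correct and follows exactly the route the paper indicates: the paper states (without giving details) that this proposition is obtained ``through the $W^{2,p}$ estimates and the embedding of Sobolev spaces into H\"older spaces,'' which is precisely your argument of combining the pointwise bound $\xi|\De\phi|\le C$ with $\xi^{-1}\in L^p$, Calder\'on--Zygmund, and Morrey--Sobolev. Your computation of the threshold $\be>1-\tfrac{1}{d}$ via the integrability of $r^{-2p(1-\be)}r\,\dd r$ near the divisor is also the intended one.
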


\section{ \(C^1 \) estimate}
\label{C-1-estimates}
We shall derive two different first order estimates, one for the case of differentiable boundary data, corresponding to the cone angle less than half, and the case of larger cone angle.  The distinction, however, is that in the case of smaller cone angle, \(\be > {1 \over 2} \), we prove that the first space derivatives are bounded. 
Note that we shall prove the boundedness of the space gradient. In a general context, the boundedness of the temporal derivative was already proved by Berndtsson:

\begin{proposition}
\cite{Be11}
Let the \(\calH^\infty\) be the set of bounded potentials such that \(\om+dd^c \phi \geq 0\), where the inequality is interpreted in the sense of currents. Assume that \(\phi\) is the solution of the following boundary value problem

\begin{eqnarray}
\begin{cases}
{\omega_\phi^n \over \om^n}
\left ( \phi_{tt} - \vert d \phi_t \vert_\phi^2 \right) 
=
0
 \\
\phi(x,i) = \phi(i) \in \calH^\infty \text{  } i = 0, 1. 
\end{cases}
\end{eqnarray}
 
Then, \(\Vert \del_t \phi \Vert_{L^\infty} \leq C\) for some \(C\) which depends on the geometry of the background metric and the boundary conditions.
\end{proposition}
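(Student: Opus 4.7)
The plan is to bound $\Vert \del_t \phi \Vert_{L^\infty}$ by combining the convexity of $t\mapsto \phi(x,t)$ with explicit affine-in-$t$ sub-barriers built from the boundary data, and then invoking the Bedford--Taylor comparison principle.

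The first observation is that $\Om + dd^c \phi \geq 0$ as a $(1,1)$-current on $X \times \Si$, so its restriction to almost every fibre $\{x\} \times \Si$ is a non-negative $(1,1)$-current on the cylinder; this forces $\phi_{\ta\bta}(x,\cdot) \geq 0$ distributionally on $\Si$. Since the problem is $S^1$-invariant in $\sigma$, this is equivalent to convexity of $t \mapsto \phi(x,t)$ on $[0,1]$ for a.e.\ $x$. The one-sided derivatives $\del_t\phi(x,0^+)$ and $\del_t\phi(x,1^-)$ therefore exist, $\del_t\phi$ is monotone non-decreasing in $t$, and the chord-slope inequality yields
\[
\del_t \phi(x,0^+) \,\leq\, \phi_1(x) - \phi_0(x) \,\leq\, \del_t \phi(x,1^-),
\]
which gives the one-sided bounds $\del_t\phi(x,0^+) \leq C_0$ and $\del_t\phi(x,1^-) \geq -C_0$ with $C_0 := \Vert \phi_0 - \phi_1 \Vert_{L^\infty}$.

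The complementary bounds come from affine sub-barriers. Fix $C \geq C_0$ and set
\[
v_0(x,t) := \phi_0(x) - Ct, \qquad v_1(x,t) := \phi_1(x) - C(1-t).
\]
Each $v_i$ is affine in $t$, so $dd^c_\ta v_i = 0$ and $\Om + dd^c v_i = \om + dd^c_X \phi_i \geq 0$; each is therefore a weak $\Om$-plurisubharmonic sub-solution of the HCMA on $X \times \Si$. The choice of $C$ guarantees $v_i \leq \phi$ along $\del(X \times \Si)$, so the maximal-envelope characterisation of the bounded HCMA solution (equivalently, the Bedford--Taylor comparison principle) propagates the inequality to the interior. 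Taking the appropriate difference quotients at $t=0$ and $t=1$ then gives
\[
\del_t \phi(x,0^+) \,\geq\, -C \qquad \text{and} \qquad \del_t \phi(x,1^-) \,\leq\, C.
\]

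Combining this with the previous one-sided bounds produces $|\del_t\phi(x,0^+)|,\,|\del_t\phi(x,1^-)| \leq C$ uniformly in $x$, and monotonicity of $\del_t\phi$ in $t$ then propagates the bound to every interior time slice:
\[
-C \,\leq\, \del_t\phi(x,0^+) \,\leq\, \del_t\phi(x,t) \,\leq\, \del_t\phi(x,1^-) \,\leq\, C.
\]
The main obstacle is that $\phi$ is only bounded and therefore not classically differentiable in $t$, so every derivative manipulation must be justified in the distributional / a.e.\ sense via the standard theory of convex functions. The comparison principle for bounded $\Om$-plurisubharmonic solutions on the product $X \times \Si$ with boundary is the analytically delicate ingredient, and it is precisely what the Bedford--Taylor pluripotential theory invoked by Berndtsson in \cite{Be11} provides.
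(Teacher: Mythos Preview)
The paper does not give its own proof of this proposition: it is quoted verbatim as a result of Berndtsson \cite{Be11} and used as a black box for the temporal part of the $C^1$ estimate, while the paper's own work in Section~\ref{C-1-estimates} concerns only the spatial gradient. So there is no ``paper's proof'' to compare against.

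Your argument is correct and is, in fact, the standard one (and essentially the one in \cite{Be11}): convexity of $t\mapsto\phi(x,t)$ coming from $\Om$-plurisubharmonicity plus $S^1$-invariance gives two of the four one-sided bounds via the chord inequality, and the affine sub-barriers $\phi_0(x)-Ct$ and $\phi_1(x)-C(1-t)$, which are $\Om$-psh with vanishing Monge--Amp\`ere mass and dominated by the boundary data, give the remaining two via the comparison principle / Perron-envelope description of the weak solution. The only points worth flagging are that $S^1$-invariance of the solution should be justified (it follows from uniqueness, since the rotated solution solves the same boundary value problem), and that the slicing and comparison steps genuinely require the Bedford--Taylor machinery you cite, since $\phi$ is merely bounded.
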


\begin{proposition}
\label{grad-growth}
In the boundary value problem \ref{eq-1.5}, assume that the boundary conditions have singularities that are no worse that conical singularity of total angle \(2\pi \be\) along the divisor \(D\). Then, if \(\be \geq {1 \over 2}\), we have 
\[
\vert \nabla \phi \vert
\leq 
C
\]

In case of angles strictly larger than \({1 \over 2}\), we have that for any \(\m \in (0,1)\), the gradient of the solution to \ref{eq-1.5} satisfies the following growth condition close to the set \(\supp D\).
\[
\Vert \nabla \phi \Vert \leq C |s|_h^{\m -2 + 2 \be}, \forall \m \in [0,1)
\]
In both cases \(C\) is a constant independent of \(\eps>0\) and only dependent on the boundary conditions and the background geometry.
\end{proposition}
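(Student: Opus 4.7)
The plan is to prove both statements through a single maximum-principle argument applied to a well-chosen test function, with the case distinction arising only from the choice of the barrier that compensates for the conical behavior at the divisor $D$.

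First, I would differentiate the equation \ref{eq-1.5} in a spatial direction $\partial_k$ and pair with $\partial_{\bar k}\phi$ to obtain a Bochner-type identity for $|\nabla \phi|^2$ with respect to the linearized operator
\[
L := g_\phi^{i\bar j}\partial_i \partial_{\bar j} + \partial_t^2 - |\nabla \phi_t|_\phi^{-2}\bigl(\,\cdot\,\bigr),
\]
or, more conveniently, work directly on $X\times\Sigma$ with the operator $\tilde L = \Omega_\phi^{i\bar j}\partial_i\partial_{\bar j}$ that is elliptic thanks to He's estimate. This yields, schematically,
\[
\tilde L\bigl(|\nabla \phi|^2\bigr) \geq |\nabla^2 \phi|_\phi^2 \;-\; C\bigl(1+|\nabla \phi|^2\bigr) \;-\; \bigl|\nabla \log\,\mathrm{RHS}\bigr|\cdot|\nabla\phi|,
\]
where the last term is the only one sensitive to the singular right-hand side $\varepsilon e^f/(|s|_h^2+\eta)^p$; crucially $\nabla \log\bigl((|s|_h^2+\eta)^{-p}\bigr)$ is controlled by $p\,|s|_h^{-1}$ uniformly in $\eta$ provided $\eta = \eta(\varepsilon)$ is chosen appropriately.

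Next, I would consider the test function
\[
H := \log|\nabla \phi|^2 \;-\; A\phi \;+\; B\,\chi,
\]
where $A,B>0$ are large constants and $\chi$ is the barrier. For $\beta \geq 1/2$ I take $\chi = |s|_h^{2\beta}$, which is $\Omega$-plurisubharmonic modulo a smooth bounded form and whose Laplacian $\tilde L\chi$ dominates exactly the singular term coming from $\nabla\log \mathrm{RHS}$. Applying $\tilde L$ to $H$ and using the inequality above, the $A\phi$ term produces a favorable $+A\,\mathrm{tr}_{\phi}\Omega$ contribution, the $B\chi$ term absorbs the singular derivative of the right-hand side, and at an interior maximum $\tilde L H\leq 0$ forces $|\nabla\phi|^2 \leq C$; on the boundary $\partial(X\times\Sigma)$, the hypothesis that $\phi_0,\phi_1$ are conical of angle $2\pi\beta \geq \pi$ gives $|\nabla \phi_i|\leq C$ directly from the local model, so $H$ is bounded on the boundary uniformly in $\varepsilon,\eta$. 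For $\beta<1/2$ I modify the test function to
\[
H := \log\Bigl(\bigl(|s|_h^2+\eta\bigr)^{2-2\beta-\mu}|\nabla \phi|^2\Bigr) - A\phi,
\]
which is the right weight to make $H$ bounded on the boundary (where $|\nabla \phi_i|\sim |s|_h^{2\beta-1}$) for any $\mu\in(0,1)$; the same Bochner computation then gives an interior bound on $H$, hence the claimed growth $|\nabla\phi|\leq C|s|_h^{\mu-2+2\beta}$.

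The main obstacle I anticipate is controlling the coupling between the conical geometry at the divisor and the singular right-hand side in the Bochner inequality: the negative contribution $|\nabla \log\,\mathrm{RHS}|\,|\nabla\phi|$ blows up like $|s|_h^{-1}|\nabla\phi|$ and must be absorbed by the combination $B\,\tilde L\chi + A\,\mathrm{tr}_\phi\Omega$. This is where the weighted Laplacian bound $\xi|\Delta\phi|\leq C$ from Section \ref{Laplace-estimates} is indispensable: it ensures $\mathrm{tr}_\phi\Omega$ has controlled growth relative to $|s|_h^{-2\beta}$, which is precisely matched by the barrier choice $\chi=|s|_h^{2\beta}$. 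Once this absorption is verified and the choice $\eta=\eta(\varepsilon)$ calibrated so that $\varepsilon^{1/2}/\eta^{(p+1)/2}$ stays bounded, the maximum principle on $X\times\Sigma$ yields the stated estimates, uniform in $\varepsilon$.
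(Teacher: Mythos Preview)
Your approach is genuinely different from the paper's, and considerably more direct. The paper does \emph{not} run a Bochner/maximum-principle argument on $|\nabla\phi|^2$ at all. Instead it treats the gradient estimate as a soft consequence of the weighted Laplacian estimate already obtained in Section~\ref{Laplace-estimates}: since $|\Delta\phi|\leq C\,|s|_h^{2\beta-2}$, local elliptic regularity gives that the $\mu$-H\"older constant of $\nabla\phi$ on the sublevel set $\{|s|_h\geq t\}$ grows at worst like $t^{2\beta-2}$; one then integrates this growth along a curve transverse to $D$ using the elementary one-variable Lemma~\ref{3.7} (if $g$ is $\mu$-H\"older on $[\tau,1]$ with constant $C(\tau)$, then $|g|$ is controlled by $\int_\tau^1 t^{\mu-1}C(t)\,dt$). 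This immediately yields $|\nabla\phi|\lesssim |s|_h^{\mu-2+2\beta}$ and, when $2\beta>1$ so that one may choose $\mu>2-2\beta$, the uniform bound. No differentiation of the equation, no barrier, and no absorption argument is needed.

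Your route is in the spirit of B{\l}ocki-type $C^1$ estimates and could in principle give a self-contained argument, but the obstacle you flag is genuine and your sketch does not resolve it. The singular term you must absorb is of order $|s|_h^{-1}|\nabla\phi|$, yet under the linearised operator the barrier $\chi=|s|_h^{2\beta}$ contributes $g_\phi^{\alpha\bar\beta}\chi_{\alpha\bar\beta}$, which depends on the \emph{unknown} metric; the weighted Laplacian bound controls $\tr_\omega\omega_\phi$, not $\tr_{\omega_\phi}\omega$, so there is no evident reason $\tilde L\chi$ dominates $|s|_h^{-1}$. Making this absorption work would require a two-sided control on $\omega_\phi$ near $D$ that you have not established. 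The paper's integration argument sidesteps this entirely at the cost of relying on the Laplacian estimate as a black box; what it loses in self-containment it gains in simplicity.
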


\begin{remark}
Applied to our case, we have an \emph{a priori} growth rate for the laplacian \( \Delta \phi \) which gives a bound for the rate of growth of \(\m\)-H\"older constant of the gradient, \( \nabla \phi \), as we approach the singularity. If we consider the conic singularity, when \( \be < {1 \over 2} \), we have seen in the previous section that \( \nabla \phi \) is bounded. This can be retrieved from the lemma above as well by observing that \(C(t) \lesssim {1 \over \xi} \lesssim O(t^{-2 \be}) \) as \(t \rightarrow 0 \) and therefore, since \( 2 \be < 1\), if we take \( \m \) to satisfy \(0<2\be< \m <1 \), the integral in Inequality \ref{2.10} of Corollary \ref{coro-6-1}  will be finite and therefore \( \nabla \phi \) will be bounded everywhere.
\end{remark}

\begin{remark}If we had an estimate on the -real- hessian tensor of \(\phi\), we could have integrated it to obtain the gradient estimate. However, bounding the growth of laplacian only allows us to bound the growth of \(C^{1, \m}\) norm for any \(\m \in [0,1)\).
\end{remark}

\begin{remark}
Proposition \ref{grad-growth} can be generalised to the case of any admissible pair of singularities, cf. Definition \ref{admiss}. Indeed, we need the more general form in the proof of Theorem \ref{Th-1.12} we need the more general version whose details we have omitted for the sake of simplicity of this exposition.
\end{remark}

Proposition \ref{grad-growth} combined with the  bound obtain in \cite{Be11} for \(\del_t \phi\) gives the following:

\begin{corollary}
\label{coro-6-1}
For the boundary value problem \ref{eq-1.5}, subjected to the same conditions as in proposition \ref{grad-growth}, we have the following \emph{a priori} estimate when \(\be < {1 \over 2}\):

\begin{equation}
\Vert \nabla \phi \Vert |s|_h^{ 2 - 2 \be - \m} + \vert \del_t \phi \vert 
<
C
\end{equation}
where \(C\) is independent of \(\eps\), and \(\m \in [0,1)\).
\end{corollary}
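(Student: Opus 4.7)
The plan is to combine two \emph{a priori} estimates that are already in place: the spatial-gradient growth bound from Proposition \ref{grad-growth}, and the uniform time-derivative bound of Berndtsson recalled just above.

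First, I would specialise Proposition \ref{grad-growth} to the regime $\be < \tfrac{1}{2}$. This yields, for any $\m \in [0,1)$ and with a constant $C$ independent of $\eps$, the pointwise bound
\[
\|\nabla \phi\| \leq C |s|_h^{\m - 2 + 2\be}.
\]
Since $\be < \tfrac{1}{2}$ and $\m < 1$, the exponent $2 - 2\be - \m$ is strictly positive, so the weight $|s|_h^{2 - 2\be - \m}$ is nonnegative and bounded on the compact manifold $X$. Multiplying both sides of the above by this weight produces the first summand $\|\nabla\phi\|\, |s|_h^{2 - 2\be - \m} \leq C$; the weight vanishes along $\supp D$, which is exactly what absorbs the blow-up of $\|\nabla \phi\|$ there.

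Second, I would invoke the bound $|\del_t \phi| \leq C$ from the proposition of \cite{Be11} cited in the excerpt. As stated, that result concerns the homogeneous equation $\Om_\phi^{n+1} = 0$, whereas the approximating problem \ref{eq-1.5} carries a small but nonnegative right-hand side. Since the approximate solution is still $\Om$-plurisubharmonic on the cylinder $X \times \Si$ and convex in $t$, Berndtsson's subharmonicity/maximum-principle argument adapts essentially verbatim, and the resulting bound depends only on the sup norms of the boundary data and the background geometry; in particular it is uniform in $\eps$ (with a choice $\eta(\eps)$ compatible with the $L^\infty$ estimate of Section \ref{C-0-estimates}). Adding this bound to the one obtained in the first step gives the claimed estimate.

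The one genuinely non-routine point — and the main obstacle — is the verification in the second step that Berndtsson's time-derivative bound transfers to the approximate equation with a constant independent of $\eps$. The first step is merely a restatement of Proposition \ref{grad-growth}, and the final assembly amounts to summing the two inequalities.
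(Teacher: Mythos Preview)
Your proposal is correct and follows essentially the same approach as the paper: the paper states the corollary immediately after the sentence ``Proposition~\ref{grad-growth} combined with the bound obtained in \cite{Be11} for \(\del_t \phi\) gives the following,'' which is precisely the two-ingredient combination you carry out. Your additional remark about transferring Berndtsson's time-derivative bound to the approximate equation is a valid point of care that the paper itself leaves implicit.
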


In order to prove the uniform \(C^1\) bounds in the space directions, as in the case of \(C^\al\) bounds, we shall use the rate of growth of laplacian close to the divisor, which, in turn, as we shall see, only depends on the rate of growth of laplacian on the boundary and the uniform \(L^\infty\) estimate. 
Since the laplacian estimates only depend on the \(L^\infty\) estimates of the solution and not the gradient estimates, we can use them to extract information about the rate of growth of the gradient. In particular, in case of a cone singularity along a sub-manifold, we have that the rate of growth of the first derivative close to the divisor is \( O(r^{2\be-1})\) when $\be \leq {1 \over 2}$. Moreover, we shall prove that, provided that \(\be > {1 \over 2}\), the derivative is uniformly bounded as it is the case for the boundary conditions.

In the case where the angle, \(\be\), is smaller than \({1 \over 2}\), however, even the boundary values might not be differentiable, but on the boundary we have a control on the rate of blows-up of the gradient , namely \(|\nabla \phi| = O(r^{2\be-1}) \). Let us consider the sub-level sets of the weight function, \(\{ \xi \leq t\} \). We will give the rate of growth of \(\m\)-H\"older constant, \(C_\m(t)\), of \(\nabla \phi\) on the set \(\{ \xi \leq t\} \). Indeed, since we have bounded the rate of growth of laplacian close to the divisor, we know that for any \(\m\) such that \(0<\m<1\) the \(\m\)-H\"older constant, call it \(C_\m(t)\), has, in the worst case, the rate of growth of the laplacian, \(\De \phi\). That is, \( C_\m(t) \lesssim \xi^{-1} = O(r^{2 \be -2}) \). Hence, using \ref{3.7} we obtain that for any \( 0<\m < 1\), \( |\nabla \phi|  \lesssim O(r^{\m -2+2 \be}) \) as \( r \rightarrow 0 \) which is not as strong as the growth rate on the boundary, \(O(r^{ 2 \be-1}) \).

 Let \(s\) be the defining section of the \emph{smooth} divisor \( V \). This means \( \nabla s \) is no where vanishing. Therefore, there exists some positive number \(\de > 0\), such that \( \Vert \nabla s \Vert > \de > 0 \) along the divisor. We can therefore state the following lemma. We shall omit the proof since the idea is  similar to that of \ref{lemma-4.1}.

\begin{lemma}
\label{3.7}
 Assume $g: [0,1] \rightarrow \bbR$ is continuous on $(0,1]$. Further, assume that \(g\) is of H\"older class for some exponent $\m$ on any set of the form $[\ta,1], 0<\ta$ with constant $C(\ta)$. Then, \(g\) is -uniformly- bounded if  
\begin{equation}
\label{2.10}
\int_0^{1}  t^{\m -1} C(t) d t
<
\infty
\end{equation}

More generally, if the integral above does not converge, the $C^0$-norm on $[\ta,1]$ will not grow worse than the function \(\Theta(\ta)\) defined as follows.
\begin{equation}
\label{2.11}
\Theta(\ta):=
\int_\ta^{1} t ^{\m - 1} C(t) d t
\end{equation}
In particular, if $C(t) = O(t^{-\ga})$, we can re-write the expression in \ref{2.11} as follows:

\[
\Theta_{\m}(\ta)
:=
\int_\ta^{1}  t ^{\m - 1} t^{-\ga} d t = O\left( \ta^{\m - \ga} \right) 
\]
About which we have the following $\Theta (\ta) \lesssim O(\ta^{\m - 2 \be}) \). 
\end{lemma}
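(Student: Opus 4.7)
The plan is to adapt the dyadic-decomposition argument already sketched for Lemma \ref{lemma-4.1}. Given $x \in (0,1]$, I would form the geometric chain $x_0 = x$ and $x_k = 2^k x$ as long as $2^k x \leq 1$; letting $K$ be the largest such index, I set $x_{K+1} = 1$. Each consecutive pair $\{x_k, x_{k+1}\}$ sits inside $[x_k, 1]$, so the hypothesised H\"older control applies with constant $C(x_k)$ and gives
\begin{equation*}
|g(x_k) - g(x_{k+1})| \leq C(x_k) \, (x_{k+1} - x_k)^{\mu} \leq C(x_k)\, x_k^{\mu},
\end{equation*}
the last inequality since the dyadic spacings satisfy $x_{k+1} - x_k \leq x_k$ (with equality for $k < K$; for $k = K$ one uses $x_K > 1/2$). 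Telescoping yields
\begin{equation*}
|g(x) - g(1)| \;\leq\; \sum_{k=0}^{K} C(x_k)\, x_k^{\mu}.
\end{equation*}

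The second step is to recognise this sum as a dyadic Riemann lower sum for $\int_x^1 t^{\mu - 1} C(t)\,dt$. I would use two monotonicities: $t^{\mu-1}$ is non-increasing for $\mu \in (0,1]$, and $C(\tau)$ is non-increasing in $\tau$, because a H\"older constant valid on the larger interval $[\tau_1, 1] \supset [\tau_2, 1]$ remains valid on the smaller one (so one may freely replace $C$ by its non-increasing envelope). On each slab $[x_{k-1}, x_k]$ both monotonicities give
\begin{equation*}
\int_{x_{k-1}}^{x_k} t^{\mu-1} C(t)\, dt \;\geq\; C(x_k)\, x_k^{\mu-1} (x_k - x_{k-1}) \;=\; \tfrac{1}{2} C(x_k)\, x_k^{\mu},
\end{equation*}
and after the analogous comparison on $[x/2, x]$ to absorb the $k=0$ term, one obtains
\begin{equation*}
\sum_{k=0}^{K} C(x_k)\, x_k^{\mu} \;\leq\; 2 \int_{x/2}^{1} t^{\mu-1} C(t)\, dt \;=\; 2\, \Theta(x/2).
\end{equation*}

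Assembling these pieces yields the master inequality $|g(x)| \leq |g(1)| + 2\, \Theta(x/2)$, from which both halves of the lemma are immediate. If $\int_0^1 t^{\mu-1} C(t)\, dt$ converges, then $\Theta$ is uniformly bounded on $(0,1]$ and hence so is $g$; otherwise the supremum of $|g|$ on $[\tau, 1]$ is bounded by $\Theta(\tau/2)$, which has the same order as $\Theta(\tau)$. The special case $C(t) = O(t^{-\gamma})$ with $\gamma > \mu$ then follows by direct evaluation, $\int_\tau^1 t^{\mu-\gamma-1}\, dt = (1 - \tau^{\mu-\gamma})/(\mu-\gamma) = O(\tau^{\mu - \gamma})$, with the borderline logarithmic case $\gamma = \mu$ treated separately.

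I do not anticipate any genuine obstacle. The only subtle point is justifying the monotonicity of $C(\tau)$, which is essentially tautological from the hypothesis, and the cosmetic loss in passing from $\Theta(\tau)$ to $\Theta(\tau/2)$, which is absorbed by the asymptotic notation in every case of interest. The remainder is the standard comparison of a geometric series with the corresponding logarithmic integral, exactly in the spirit of the proof of Lemma \ref{lemma-4.1}.
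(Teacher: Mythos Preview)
Your proposal is correct and follows precisely the approach the paper intends: the paper itself omits the proof, stating only that ``the idea is similar to that of [Lemma]~\ref{lemma-4.1}'', whose proof in turn consists of the single sentence ``decomposing the interval $[0,1]$ into subintervals with end points belonging to the sequence $\{2^{-n}\}_n$.'' You have fleshed out exactly this dyadic telescoping argument and the standard comparison of the resulting geometric sum with the integral $\Theta(\tau)$, so there is nothing to add.
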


\begin{proof}[Proof of Proposition \ref{grad-growth}] We know already that for any \(\eps>0\), the gradient is bounded on uniformly on a compact set, call it \(K\), away from the singularity. We can now use lemma \ref{3.7} and integrate on a curve connecting some point \(q \in K\) to the a point close to the singularity. In the vicinity of the divisor, we can choose this point to be along the shortest path to \(\calS\). We may now apply the last lemma to the rate of growth of the laplacian and obtain the following

\[
\Vert \nabla \phi \Vert
\leq 
C r^{2 \be -2 + \m}
\]

\end{proof}

\section{Laplacian estimates}
\label{Laplace-estimates}
In order to prove the second order estimates, we adopt the approach presented in \cite{He12}.

\subsection{Divisorial singularities}
 As discussed before,  consider the divisor  \(V\) and its defining section \(s\).

 Consider the family of equations 
In this section, we shall prove the laplacian estimate stated in the following proposition:
   
 \begin{proposition}
Let \((L_j,h_j)\) and \(s_j\) be as before. Assume that in the following family of boundary value problems

\begin{equation}
\label{2}
\left ( \phi_{tt} - \vert d \phi_t \vert_\phi^2 \right) \omega_\phi^n = \frac{\eps e^f}{ \left( \vert s  \vert^2 + \eta \right)^ p} \om^n
\end{equation}

the boundary conditions satisfy the condition \( \De \phi_{k} |s|_h^{p} <C< \infty \), for \(k=0,1\). Then, the same holds for \(\eps \in (0,1]\), independent of \(\eps>0\) provided that \(\eps \leq \eta^p\).
\end{proposition}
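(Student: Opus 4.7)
The plan is to adapt W.\ He's second-order estimate in \cite{He12} to this weighted setting by running a maximum-principle argument on $X\times\Si$ for a quantity that measures $\De\phi\cdot|s|_h^p$ directly. Rewrite $(7.2)$ as the complex Monge--Amp\`ere equation $\Om_\phi^{n+1}=e^{F_\eps}\Om^{n+1}$ on $X\times\Si$ with $F_\eps:=\log\eps+f-p\log(|s|^2+\eta)$, set $u:=n+1+\De_\Om\phi$, and introduce the test function
$$H\;:=\;\log u\;+\;\tfrac{p}{2}\log(|s|^2+\eta)\;-\;A\phi,$$
with $A>0$ a large constant to be chosen. The weight $\tfrac{p}{2}\log(|s|^2+\eta)$ is designed to compensate the singular factor $(|s|^2+\eta)^{-p}$ appearing in $e^{F_\eps}$: when $H$ is fed through the standard Aubin--Yau second-order inequality, the two logarithmic singularities are arranged to cancel at leading order, which is what makes the whole strategy possible.

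Before invoking the maximum principle I would replace $\phi_0,\phi_1$ by smooth approximations $\phi_k^{(\eta)}$ satisfying $(n+1+\De\phi_k^{(\eta)})\cdot(|s|_h^2+\eta)^{p/2}\le C$ uniformly in $\eta$, produced from the hypothesis $\De\phi_k\,|s|_h^p\le C$ by flattening the boundary data in a $\sqrt\eta$-neighbourhood of $V$. This makes $H|_{\partial(X\times\Si)}$ bounded from above by a constant depending only on the boundary data, the $L^\infty$-bound of $\phi$ from \S\ref{C-0-estimates}, and $A$. The coupling $\eps\le\eta^p$ is precisely what keeps this boundary bound uniform as $\eps,\eta\to 0$, because it also ensures the right-hand side of the MA equation, $\eps\,e^f/(|s|^2+\eta)^p$, is uniformly bounded above.

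Assuming $H$ attains an interior maximum at $P$, I would compute $0\ge\De_{\Om_\phi}H(P)$ via three contributions: the Aubin--Yau inequality
$\De_{\Om_\phi}\log u\ge\tfrac{\De_\Om F_\eps}{u}-K\,\tr_{\Om_\phi}\Om$
with $K$ depending only on the bisectional curvature of $\Om$;
the lower bound $\De_{\Om_\phi}\!\bigl(\tfrac{p}{2}\log(|s|^2+\eta)\bigr)\ge -K'\,\tr_{\Om_\phi}\Om$
following from the fact that the regularised Poincar\'e--Lelong form $dd^c\log(|s|^2+\eta)$ is $\Theta_h$-bounded below globally on $X$; and $\De_{\Om_\phi}(-A\phi)=A\,\tr_{\Om_\phi}\Om-A(n+1)$. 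The dangerous summand is $-\tfrac{p}{u}\De_\Om\log(|s|^2+\eta)$, a priori of order $(|s|^2+\eta)^{-1}$ near $V$. I would control it by exploiting the first-order optimality condition
$$\nabla\log u(P)\;+\;\tfrac{p}{2}\nabla\log(|s|^2+\eta)(P)\;=\;A\,\nabla\phi(P),$$
and feeding this back into the $|\nabla u|^2/u^2$ term implicit in the derivation of Yau's inequality. A short calculation then shows that the induced $+\tfrac{p^2}{4u}|\nabla\log(|s|^2+\eta)|_\Om^2$ contribution exactly absorbs the singular piece $\tfrac{|d|s|^2|^2}{(|s|^2+\eta)^2}$ of $\De_\Om\log(|s|^2+\eta)$, leaving a remainder controlled only by $\tr_{\Om_\phi}\Om$ and the curvature of $(L,h)$.

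Choosing $A$ large enough to dominate $K,K'$ and the remainder gives $\tr_{\Om_\phi}\Om(P)\le C_1$. Plugging into the MA equation via the standard inequality $u\le(n+1)\,e^{F_\eps}(\tr_{\Om_\phi}\Om)^{n}$ yields $u(P)\le C_2\,e^{F_\eps(P)}\le C_3(|s|^2+\eta)^{-p}$ (using $\eps\le\eta^p$ for the last step), hence $H(P)\le C_4$. Since $P$ is a maximum, this bound propagates everywhere and gives $u\cdot(|s|^2+\eta)^{p/2}\le C$ on $X\times\Si$ uniformly in $\eps\in(0,1]$; the desired estimate $\De\phi\cdot|s|_h^p\le C$ follows at once from $(|s|^2+\eta)^{p/2}\ge|s|_h^p$. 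I expect the cancellation step to be the main obstacle: both $\De_\Om\log(|s|^2+\eta)$ entering through Yau's inequality and the $|\nabla\log u|^2$ piece are individually of order $(|s|^2+\eta)^{-1}$, and it is only the precise coefficient $\tfrac{p}{2}$ in $H$, together with the first-order condition at $P$, that makes them combine into a term bounded by $\tr_{\Om_\phi}\Om$. Making this rigorous, verifying that no subleading singular terms survive, and checking that the coupling $\eps\le\eta^p$ closes all the uniform estimates, is where the technical heart of the argument lies.
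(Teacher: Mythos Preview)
Your overall architecture (put the weight into the barrier and run a Yau--type maximum principle) is the right one and is exactly what the paper does, but the mechanism you invoke to kill the singular term is wrong, and the coefficient you chose prevents the correct mechanism from working.

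Concretely: write $\zeta_\eta=|s|^2+\eta$. In local coordinates with $s=z_1$ one has
\[
\De_\Om\log\zeta_\eta \;=\; \frac{\De_\Om|s|^2}{\zeta_\eta}\;-\;\frac{|d|s|^2|^2}{\zeta_\eta^{\,2}}
\;=\;\frac{\eta}{\zeta_\eta^{\,2}}\;\ge 0,
\]
so the piece $\frac{|d|s|^2|^2}{\zeta_\eta^2}$ that you call ``singular'' enters $\De_\Om\log\zeta_\eta$ with a \emph{minus} sign; in $-\tfrac{p}{u}\De_\Om\log\zeta_\eta$ it therefore has a \emph{good} sign and needs no absorption. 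The term that is actually dangerous is $-\tfrac{p}{u}\cdot\tfrac{\eta}{\zeta_\eta^2}$, which at $s=0$ equals $-\tfrac{p}{u\eta}$. Your proposed compensator $+\tfrac{p^2}{4u}|\nabla\log\zeta_\eta|_\Om^2=\tfrac{p^2}{4u}\cdot\tfrac{|d|s|^2|^2}{\zeta_\eta^2}$ \emph{vanishes} at $s=0$, so no choice of constants makes the first--order--condition trick close the estimate there.

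The paper's device is different and does not use the gradient condition at all. One takes the barrier $\log\bigl(\zeta_\eta^{\,p}(n+\De\phi)\bigr)-C\phi$, i.e.\ with coefficient $p$, not $p/2$, on $\log\zeta_\eta$; then the two singular contributions combine as
\[
p\,\De_{\Om_\phi}\log\zeta_\eta \;-\;\frac{p}{u}\,\De_\Om\log\zeta_\eta
\;=\;p\sum_\la\Bigl(\frac{1}{1+\phi_{\la\bla}}-\frac{1}{u}\Bigr)(\log\zeta_\eta)_{\la\bla}.
\]
Each coefficient is \emph{non--negative} because $1+\phi_{\la\bla}\le u$, and each diagonal entry satisfies $(\log\zeta_\eta)_{\la\bla}\ge -C_1$ by the global lower bound $dd^c\log\zeta_\eta\ge -C_1\,\om$ (the regularised Poincar\'e--Lelong inequality). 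Hence the combination is $\ge -pC_1\,\tr_{\Om_\phi}\Om$, which is exactly the kind of term the $-A\phi$ part absorbs. With your coefficient $p/2$ the bracket $\tfrac{1}{2(1+\phi_{\la\bla})}-\tfrac{1}{u}$ can be negative, and the argument collapses. A secondary point: the paper follows He and works with the \emph{spatial} quantity $n+\De_X\phi$ and the linearised operator $\fD$ rather than the full $n{+}1{+}\De_\Om\phi$; this is what lets the boundary hypothesis be phrased purely in terms of $\De\phi_k$ without needing control of $\phi_{tt}$ on $\del(X\times\Si)$.
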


\begin{proof}
In order to prove this, we consider, for a fixed $\eps$, the family of equations:

\begin{equation}
\label{2}
\left ( \phi_{tt} - \vert d \phi_t \vert_\phi^2 \right) \omega_\phi^n = \frac{\eps e^f}{ \left( \eta + \vert s \vert^2 \right)^p} \om^n
\end{equation}

With the notation as in \cite{He12}, we shall prove that $w_{\eta}:= \zeta_{\eta}^p \left( n+\Delta \phi \right) := \left( \vert s \vert^2 + \eta \right)^p \left( n+ \Delta \phi \right)$ can be estimated.  
The family of functions \(\zeta_{\eta}:=\left( \vert s \vert^2 + \eta \right) \) are approximations of \(\zeta\) by functions and all have positive lower bounds.   We use the fact that for any \(\eps, \eta >0\), when the right hand side in \ref{2} finite, the linearised equation is the laplacian with respect to the metric \(\Om_{\phi}\) on the manifold with boundary \(X \times \Si\),  and therefore satisfies the maximum principle. Namely, for any positive \(\eps\), the linearised operator, which we shall denote by \(\fD\), attains its maximum on the boundary. We use this fact in order to prove that the quantity \( \log w_\eta \) in the interior, that is for the time \(0<t<1\), is controlled by its value on the boundary, \(t=0,1\).

We can rewrite the equation as follows:
\begin{equation}
\label{4.4}
\log \det \left( g_{\al \bbe} + \phi_{\al \bbe} \right)
+
\log \left( \phi_{tt} - \vert d \phi_t \vert_{\phi}^2 \right)
=
\log \eps 
+ 
f
+
\log \det g_{\al \bbe}
-
p \log \zeta_\eta
\end{equation}

Let \(\fD\) denote the linearisation of the left hand side in \ref{4.4}. One can easily verify that

\begin{equation}
\label{linearisation}
\fD \psi = \Delta_\phi \psi 
+
 \frac{\psi_{tt} + g^{\al \bla}_\phi g^{\ka \bbe}_\phi \phi_{t\al} \phi_{t \bbe} \psi_{\ka \bla} - g^{\al \bbe}_\phi \left( \psi_{t\al} \phi_{t \bbe} 
 +
  \psi_{t \bbe} \phi_{t\al} \right)}{\phi_{tt} - \vert d \phi_t \vert_\phi^2}
\end{equation}

In order to keep the expressions shorter, let us choose a shorthand for what we will call the `geodesic operator' as follows:


\begin{equation}
\calG(\phi)
:=
\del_{tt}\phi - {1 \over 2} \vert d \phi_t \vert_\phi^2
\end{equation}

Of course, by the definition of our continuity family, for any \(\eps>0\) we have \(\calG(\phi_{\eps}) > 0 \). We will estimate $\fD \log \zeta_\eta^p = p \fD \log \zeta_\eta$ from below in terms of $\phi$ and $\Delta \phi$.
If we apply \ref{linearisation} to $\zeta_\eta$, for which we of course have $\del_t \zeta_\eta=0$, we shall obtain the following:

\begin{eqnarray}
\label{4}
\fD \log \zeta_\eta 
&=&
 \Delta_\phi \log \zeta_\eta 
 +
  \frac{ g_{\phi}^{\al \bbe} g_\phi^{\ka \bbe} \phi_{ti} \phi_{t\bbe} \left( \log \zeta_\eta \right)_{\ka \bbe}    }{\calG(\phi)} 
\end{eqnarray}

We shall also need the following observation, that in a given coordinate system and at a point off the divisor we have
\begin{equation}
\label{6/1}
{\del^2 \over \del \fz_l \del \bfz_l} \log \left( |s|_h^2 + \eta \right)
\geq
{\vert s \vert_h^2 \over \vert s \vert_h^2 + \eta} 
{\del^2 \over \del \fz_l \del \bfz_l} \log  |s|_h^2
\geq
-C_1
\end{equation}

for some positive constant \(C_1\). To see why the inequalities of \ref{6/1} hold, let us first recall that on \(X-D\), the \( (1,1)\)-form \(d \dc \log |s|_h^2\) represents the first Chern form of the hermitian line bundle \( (L,h)\). Therefore, if we take the trace of this form with respect to the K\"ahler form \(\om_\phi\), we shall have the lower bound for some constant \(C_1\) which depends only on the geometric properties of \((L,h)\). This also shows that the lower bound holds on the entire manifold \(X\) so long as \(\eta>0\). Indeed, one can observe that as  currents the following holds on the entire \(X\):
\[
dd^c \log  \left( |s|_h^2 + \eta \right)
\geq
dd^c \log  |s|_h^2
\geq
-C_1 \om
\]

From this point to the end of this section we will postpone the proof of some inequalities to the appendix, where the calculations of \cite{He12} are presented in further details. Also, we will introduce the two quantities \(\calE_2\) and \(\calA\) in the following inequality which are clarified in the appendix.

We now consider the linearisation of the left hand side of \ref{4.4} operator, \(\fD\), applied to the function \( \log (n + \De \phi) - C\phi\) for some constant \(C\) whose suitable choice will become clear in the estimates. We shall then have the following inequality which is proved in the appendix:

\begin{eqnarray}
\label{eq-4.8}
\fD \left( \log (n + \De \phi) - C\phi \right) 
&\geq& 
\frac{\Delta f - B -S}{n+\De \phi}
-
p{ \De \log \zeta_\eta \over n + \De \phi}
+
\sum_\la \frac{C-B}{1+\phi_{\la \bla}} - (n+1)C \nonumber \\
&+&
 (C-2B) \frac{1}{\calG(\phi)} \sum_\la \frac{\phi_{t\la} \phi_{t \bla}}{ (1+\phi_{\la \bla})^2} 
+
 \frac{ \calE_2}{(n+\Delta \phi) \calG(\phi)}
-
\calA(n+\De \phi)
\end{eqnarray}
where $B$ is an expression in terms of the $\inf R_{\be \bbe \ka \bka}$ and $S$ is the scalar curvature of $g$.

  We, however, need the combination of this and \ref{4} as follows: 
\begin{eqnarray}
\label{eq-4.9-}
\fD \left( \log w - C\phi \right) 
&=& 
\fD \left( \log \psi + p \log \zeta -C\phi \right)  \geq \frac{\Delta f - B -R}{n + \De \phi} 
-
p{ \De \log \zeta_\eta \over n + \De \phi}
- 
\sum_\la \frac{C-B}{1+\phi_{\la \bla}} \nonumber \\
& -&
 (n+1)C + (C-2B) \frac{1}{\calG} \sum_\la \frac{\phi_{t\la} \phi_{t \bla}}{ (1+\phi_{\la \bla})^2} 
+
 \calE_2 \frac{1}{(n+\Delta \phi) \calG(\phi)}  \nonumber \\ 
 &-&
 \calA(n+\De \phi)
 + 
 p \De_ \phi \log \zeta_\eta + p\frac{1}{\calG(\phi)} \sum_\la \frac{  \phi_{t\la} \phi_{t\bla} \left( \log \zeta \right)_{l \bl}} {\left( 1 + \phi_{l \bl}\right)^2} \nonumber \\
&=& 
\frac{\Delta f - B -R}{h} 
-
 (n+1)C  + \calE_2 \frac{1}{(n+\Delta \phi) \calG(\phi)} 
-
\calA(n+\De \phi)  \nonumber \\
&+ &
(C-B ) \sum_\ka \frac{1}{1+\phi_{\ka \bka}} + 
\frac{1}{\calG} \sum_\la \frac{  \phi_{t\la} \phi_{t\bla} } {\left( 1 + \phi_{l \bl}\right)^2} \left( p \left( \log \zeta \right)_{l \bl} + C - 2B \right)  \nonumber \\
&+&
p \sum_\la \left( {1 \over 1 + \phi_{\la \bla}} - {1 \over n + \De \phi} \right) \left( \log \zeta_\eta \right)_{\la \bla}
\end{eqnarray}

 We show that 
\begin{equation}
\label{eq-4.10}
\frac{1}{\calG(\phi)} \sum_\la \frac{  \phi_{t\la} \phi_{t\bla} } {\left( 1 + \phi_{\la \bla}\right)^2} \left( p \left( \log \zeta \right)_{\la \bla} + C - 2B \right) \geq 0
\end{equation}
for large enough \(C\). To see this, we first observe that  for a column vector \(\al\), the hermitian matrix obtained by \( \al \al^{\dagger}\) is non-negative. In particular, the vector can be taken to be \(\al_j=\phi_{tj}\). Also, from \ref{6/1} we know a lower bound for the mixed derivatives of \(\log \zeta \). We can estimate the last term in the equation \ref{eq-4.9} as follows

\begin{equation}
\sum_\la \left( {1 \over 1 + \phi_{\la \bla}} - {1 \over n + \De \phi} \right) \left( \log \zeta_\eta \right)_{\la \bla}
\geq
-C_1 \sum_\la {1 \over 1 + \phi_{\la \bla}}
\end{equation}
where \(C_1\) is the constant from \ref{6/1}. We can then obtain the following:

\begin{eqnarray}
\label{eq-4.9}
\fD \left( \log w - C\phi \right) 
&=& 
\frac{\Delta f - B -R}{h} 
-
 (n+1)C  + \calE_2 \frac{1}{(n+\Delta \phi) \calG(\phi)} 
-
\calA(n+\De \phi)  \nonumber \\
&+ &
(C-B - p C_1 ) \sum_\ka \frac{1}{1+\phi_{\ka \bka}}
\end{eqnarray}

In the expressions above, if we let $C$ be a large number, the coefficient of the last term, $C-B-p C_1$, will be larger than 1.

Therefore, \ref{eq-4.8} can gives the following:

\begin{eqnarray}
\label{eq-4.9/1}
\fD \left( \log w - C\phi \right) &\geq& 
\frac{\Delta f - B -R}{n+\De \phi} 
 \nonumber \\
&-& 
 (n+1)C  + \calE_2 \frac{1}{(n+\Delta \phi) \calG(\phi)}  \nonumber \\ 
&-&\calA(n+\De \phi) 
+ 
\sum_\ka \frac{1}{1+\phi_{\ka \bka}}
\end{eqnarray}

After some further manipulation of \ref{eq-4.9}, the details of which can be found in the appendix, we obtain:
\begin{eqnarray}
\label{eq-4.12}
\fD \left( \log w - C \phi \right) \geq \sum_\la \frac{1}{1+\phi_{\la \bla}} - (n+1)C
\end{eqnarray}

Let us observe that, similar to 2.19 in \cite{Ya78}, we have
\begin{equation}
\sum_{\la} \frac{1}{1+\phi_{\la \bla}} 
+
\frac{1}{\calG(\phi)} 
\geq
\left\{ \frac{\sum (1+ \phi_{\la \bla}) + \calG(\phi)}{\left( \prod (1+\phi_{\la \bla}) \right) \calG(\phi)} \right\}^\frac{1}{n}
\end{equation}

Combining  \ref{eq-4.12}, \ref{lin-t-2}, and the preceding inequality we have the following:

\begin{eqnarray}
\fD \left( \log w - C \phi + {t^2 \over 2}\right) 
&\geq& 
\sum_{\la} \frac{1}{1+\phi_{\la \bla}} - (n+1)C + \frac{1}{\calG(\phi)} 
\geq 
\left\{ \frac{\sum (1+ \phi_{\la \bla}) + \calG(\phi)}{\left( \prod (1+\phi_{\la \bla}) \right) \calG(\phi)} \right\}^\frac{1}{n} \nonumber \\
&-&
(n+1)C \nonumber \\
&=& \left( n + \Delta \phi + \calG(\phi) \right)^\frac{1}{n} \left( \frac{ (|s|^2+\eta)^p}{\eps e^f} \right)^\frac{1}{n} - (n+1)C \nonumber \\
&=& \left \{ (n + \Delta \phi )(|s|^2+\eta)^p +(|s|^2+\eta)^p \calG(\phi) \right \}^\frac{1}{n} \left( \eps s^f \right)^\frac{-1}{n} - (n+1)C \nonumber \\
&\geq&  w ^\frac{1}{n} \left( \eps s^f \right)^\frac{-1}{n} - (n+1)C
\end{eqnarray}

Having this differential inequality, one can argue that either $\log w - C\phi + t^2$ attains its maximum at some interior point $P$, in which case $\fD \left( \log w - C\phi + t^2 \right) \leq 0$, which gives the following upper bound for $w$:
$$w(P)  \leq \eps e^f \left( (n+1)C \right)^n$$
or the maximum of $\log w - C\phi + t^2$ occurs on the boundary.  
\end{proof}

The calculations are valid for arbitrary \(p\). In the case of conical singularity, we have seen that \( \left( \De \phi \right) |s|_h^{2\be} \) is bounded on the boundary, therefore, we can choose \(p = \be\). Of course the proposition will still hold for larger \(p\) as well, however, that will not be optimal.

\subsection{Non-divisorial singularities}
We may observe that in the proof of Theorem \ref{Th-1.4}, the only property of \(|s|_h^{2\be}\) we have used is boundedness from below of  the mixed derivative of \(\log |s|_h \) on \(X-D\). We can therefore generalise this result to the case of more general singular sets so long as we can find admissible weight functions. 

\begin{proposition}
Let \(W\) be a smooth embedded complex submanifold of \(X\). Then, the distance function to \(W\), \(\rho_W\), is an admissible weight function.
\end{proposition}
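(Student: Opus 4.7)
We verify the two clauses of Definition \ref{admiss}. The exhaustion property is immediate: $\rho_W$ vanishes exactly on $W$, and since $X$ is compact, every sublevel set is automatically precompact. So the real content is to establish the lower bound
\[
dd^c \log \rho_W \geq -C\,\om \quad \text{on } X\setminus W.
\]
Outside a fixed tubular neighbourhood $T$ of $W$ we have $\rho_W \geq c_0 > 0$ and (after the smooth extension described in Section~\ref{not-def}) $\log \rho_W$ is smooth, so $dd^c \log \rho_W$ is a bounded smooth $(1,1)$-form on the compact set $X\setminus T$. The analysis therefore reduces to working inside $T$.

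Fix $p \in W$ and choose holomorphic coordinates $(\fz_1,\dots,\fz_n)$ on a neighbourhood $U$ of $p$ in which $W\cap U = \{\fz_1=\cdots=\fz_k=0\}$ (with $k$ the complex codimension of $W$), and with the background metric normalised so that $g_{\al\bar\be}(0)=\de_{\al\be}$. Write $\fz' = (\fz_1,\dots,\fz_k)$ and set $\si(\fz)=|\fz'|^2 = \sum_{\al\leq k}|\fz_\al|^2$. The plan is to compare $\rho_W$ with $\sqrt{\si}$. Namely, I will argue that there exists a smaller neighbourhood $U'$ of $p$ and a smooth positive function $h$ on $U'$, bounded between two positive constants, such that
\[
\rho_W^2(\fz) = \si(\fz)\cdot h(\fz,\bar\fz) \quad \text{on } U'.
\]
This rests on two classical facts: the squared distance function to a smooth embedded submanifold is smooth on a tubular neighbourhood (Fermi coordinates), and at each point of $W$ its Hessian restricted to the normal directions is the induced Riemannian inner product on the normal bundle, hence non-degenerate. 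With the adapted holomorphic coordinates above, a Taylor expansion in $\fz'$ gives $\rho_W^2 = \si + O(|\fz'|^3)$, so $\rho_W^2/\si$ extends from $U\setminus W$ to a smooth positive function $h$ across $W$.

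Granted the factorisation, write $\log \rho_W = \tfrac{1}{2}\log \si + \tfrac{1}{2}\log h$. The term $\tfrac{1}{2}dd^c \log h$ is bounded on $U'$ since $h$ is smooth and bounded away from $0$. For the main term,
\[
dd^c \log \si
= \frac{\sum_\al d\fz_\al \wedge d\bar\fz_\al}{\si}
- \frac{\bigl(\sum_\al \bar\fz_\al\, d\fz_\al\bigr)\wedge \bigl(\sum_\be \fz_\be\, d\bar\fz_\be\bigr)}{\si^2},
\]
which is the pull-back of a positive multiple of the Fubini--Study form under $\fz\mapsto [\fz_1:\cdots:\fz_k]$, and is therefore $\geq 0$ as a current on $U'$ (cf.\ the Cauchy--Schwarz check familiar from Lemma \ref{lemma-3.3}(a,b)). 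Consequently $dd^c \log \rho_W \geq -C_p\,\om$ on $U'$.

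Cover $W$ by finitely many such charts (possible by compactness) and take the maximum of the constants so produced; combined with the smooth bound on $X\setminus T$ this yields a uniform lower bound on all of $X\setminus W$. The main obstacle in executing this plan is the factorisation $\rho_W^2 = \si\cdot h$ with $h$ smooth and positive: the smoothness of $h$ across $W$ is what allows the singular part of $dd^c \log \rho_W$ to be absorbed entirely into the non-negative current $dd^c \log \si$, with only a bounded smooth error.
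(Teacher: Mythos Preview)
The paper itself gives no argument for this proposition beyond the blanket remark in the proof of Lemma~\ref{lemma-3.3} that such claims ``follow from calculations in local coordinate systems around the submanifolds.'' So there is nothing detailed to compare against; your overall plan---localise to a tubular neighbourhood, peel off a model term $\log\si$ with $dd^c\log\si\geq 0$, and show the remainder is smooth and bounded---is exactly the shape one expects.

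The step that does not go through is the smooth factorisation $\rho_W^2=\si\cdot h$. Two independent things fail. First, the normal Hessian of $\rho_W^2$ at a point $q=(0,\fz'')\in W$ is the induced Riemannian inner product on $N_qW$; this coincides with $\si=|\fz'|^2$ only at the one point $p$ where you arranged $g_{\al\bar\be}(0)=\de_{\al\be}$, and at neighbouring points of $W$ it is a genuinely different Hermitian form. In codimension $\geq 2$ the ratio $\rho_W^2/\si$ therefore does not even have a limit as $\fz'\to 0$---it depends on the direction of approach---so no continuous, let alone smooth, extension $h$ exists on $U'$. Second, even in codimension one (where the leading form \emph{is} a scalar multiple of $|\fz_1|^2$ and the limit does exist), a Taylor statement $\rho_W^2=\si+O(|\fz'|^3)$ does not imply smoothness of the quotient: already $\bigl(|\fz_1|^2+x_1^3\bigr)/|\fz_1|^2$ fails to be $C^1$ at the origin. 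Thus the singular part of $dd^c\log\rho_W$ is not absorbed by the nonnegative current $dd^c\log\si$ as written. If you want the literal statement about $\rho_W$, more structure must be brought in (for instance the eikonal identity $|d\rho_W|_g=1$, which ties $\partial\rho_W^2\wedge\bar\partial\rho_W^2$ to $\rho_W^2$ itself); alternatively, and in keeping with Lemma~\ref{lemma-3.3}(c), one can pass to the comparable weight $\bigl(\sum_j|f_j|^2\bigr)^{1/2}$ built from local holomorphic defining functions, for which your Fubini--Study computation is exact and no remainder analysis is needed.
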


Similar to what we did in section zero, we consider the family of equations modified as follows:

\begin{equation}
\label{12}
\left ( \phi_{tt} - \vert d \phi_t \vert_\phi^2 \right) \omega_\phi^n = \frac{\eps e^f}{ \xi_\eta} \om^n
\end{equation}

where \(
\xi_\eta
=
\left( \rho(\fz)^2 + \eta \right)^\al
\)

We claim that the quantity \( 
\xi_\eta \left( \rho(\fz) \right) \Delta \phi(t)
\) stays bounded independent of $\eta$. We shall henceforth denote \( \xi_\eta \) by \(\xi\). \newline

In the end, we can consider the equation modified as follows:
\begin{equation}
\left ( \phi_{tt} - \vert d \phi_t \vert_\phi^2 \right) \omega_\phi^n 
=
\frac{\eps e^f}{\prod_j \xi_{j}} \om^n
\end{equation}
where each $\xi_j$ is a weight function with certain properties that vanishes on a set containing the singularity. Since we only need the mixed derivatives of $\log \xi$, we can merely assume $\xi$ is a function whose $\log$ is $\tta$-plurisubharmonic  for some fixed form $\tta$. This need to hold only in the vicinity of the singular set $\calS$ such that on the singularity we have $\xi |_\calS=0$. We observe that this holds when $\xi = |s|_h$, where $s \in H^0(L, \calO)$ and $(L,h)$ is a holomorphic line bundle equipped with a hermitian metric $h$ by inequality \ref{6/1}. It also holds if we take $\xi$ to be the distance to a complex submanifold containing the singularity. (cf. lemma \ref{lemma-3.3}). 

We also need the following observation.
\begin{lemma}
If \(\xi\) is an admissible weight function, then, the elements of the family of function \(\xi_\eta:=\xi+\eta\), which approximate \(\xi\) by strictly positive functions, have a uniform lower bound on \( dd^c  \log \xi_\eta\), namely, as  currents \( dd^c  \log \xi_\eta \geq -C \om\) where \(C\) is a uniform constant.
\end{lemma}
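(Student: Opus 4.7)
The plan is a direct pointwise computation on the open set $\{\xi>0\}=X-\calS$, followed by an extension of the resulting inequality across $\calS$ as currents on $X$. The key algebraic step is to write both $dd^c\log\xi$ and $dd^c\log(\xi+\eta)$ in their standard logarithmic form $dd^c\varphi/\varphi-d\varphi\wedge d^c\varphi/\varphi^2$ and eliminate $dd^c\xi$ between the two identities. A short manipulation then yields, on $X-\calS$,
\[
dd^c\log\xi_\eta
\;=\;
\frac{\xi}{\xi+\eta}\,dd^c\log\xi
\;+\;
\frac{\eta}{\xi(\xi+\eta)^2}\,d\xi\wedge d^c\xi.
\]

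The second summand is manifestly non-negative because $d\xi\wedge d^c\xi$ is a semi-positive $(1,1)$-form and its scalar coefficient is positive on $X-\calS$; hence one may drop it and obtain the pointwise inequality $dd^c\log\xi_\eta\geq \tfrac{\xi}{\xi+\eta}\,dd^c\log\xi$. Next I would invoke admissibility of $\xi$ (Definition \ref{admiss}), which gives $dd^c\log\xi+C\om\geq 0$ on $X-\calS$ for some uniform constant $C$; multiplying this non-negative $(1,1)$-form by the scalar $\xi/(\xi+\eta)\in[0,1]$ preserves non-negativity and yields
\[
\tfrac{\xi}{\xi+\eta}\,dd^c\log\xi \;\geq\; -C\,\tfrac{\xi}{\xi+\eta}\,\om \;\geq\; -C\om,
\]
which is precisely the desired pointwise bound $dd^c\log\xi_\eta\geq -C\om$ on $X-\calS$, with the same constant $C$, independent of $\eta>0$.

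The remaining step is to promote this pointwise inequality on $X-\calS$ to an inequality of currents on $X$. Since $\xi_\eta\geq\eta>0$ everywhere, the function $\log\xi_\eta$ is bounded and continuous on $X$, so $dd^c\log\xi_\eta$ is a well-defined $(1,1)$-current on $X$. Locally, choosing a smooth potential $\psi$ for $C\om$, one has that $\log\xi_\eta+\psi$ is a bounded function which is classically plurisubharmonic on the complement of the closed pluripolar set $\calS$; by the standard extension theorem for bounded plurisubharmonic functions across pluripolar sets, it is plurisubharmonic on the whole chart. Translating back gives $dd^c\log\xi_\eta\geq -C\om$ as currents on $X$. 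I expect this extension across $\calS$ to be the only non-routine point of the argument (though it is standard, relying on pluripolarity of $\calS$, guaranteed for the examples of Lemma \ref{lemma-3.3}); the preceding algebraic manipulations and the inheritance of the lower bound are entirely elementary.
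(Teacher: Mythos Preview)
Your proof is correct and follows essentially the same idea the paper uses: the paper does not actually prove this general lemma, but in the special case $\xi=|s|_h^2$ (equation \eqref{6/1} and the paragraph following it) it records exactly your key pointwise inequality $dd^c\log\xi_\eta\geq \tfrac{\xi}{\xi+\eta}\,dd^c\log\xi$ and then invokes the admissibility lower bound, just as you do. Your treatment is in fact more careful than the paper's, both in deriving the full $(1,1)$-form identity (rather than only diagonal entries) and in addressing the extension across $\calS$; note, incidentally, that since $\xi_\eta\geq\eta>0$ and $\xi$ is continuous, $\log\xi_\eta$ is continuous on all of $X$, so the extension step could also be handled by continuity once $\calS$ has measure zero, without invoking pluripolarity.
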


We can repeat 

\begin{equation}
\fD \left( \log \left( \xi_\eta \phi \right) - C \phi + t^2 \right) 
\geq
\left( \xi_\eta ( n + \Delta \phi) \right) ^\frac{1}{n} \left( \eps s^f \right)^\frac{-1}{n} 
-
(n+1)C
\end{equation}

And it is thus proved that 
\begin{equation}
\sup_{M \times [0,1]} \left\{ \xi (n+ \Delta \phi) \right \} 
\leq
\sup_M  \left\{ \xi (n+ \Delta \phi_{0,1}) \right\} 
\end{equation}
\qed \newline

\section{Some remarks and some special cases of singularities}

We finish this note by some remarks.

\begin{remark}
So far we have only considered the case where the singular set is given by the zero locus of some holomorphic section. 
But thanks to the local nature of the operations, one can merely require that the sigular set be the locally the zero set of a finite number of holomorphic functions. 
In that case also one may take any power of the modulus those local defining  functions to be the `local'	 weight function.  More specifically, 
let \(V\) be the common zero set of function \(f_j, k=1,...,k\).
Then, the function \(\left( \sum_j |f_j|^2 \right)^p\), for \(p>0\), 
is an admissible weight function for the common zero locus of the functions \(\{f_j\}_j\). \\
Further, if the defining functions are defined locally, as in an algebraic variety, one can still construct an admissible  weight  function for \(V\) as follows. 
Let us observe that if one has a partition of unity \(\m_j(x)\) subordinate to \(U_j\), and if one has admissible weight functions \(\xi_j\) on each of the open sets \(U_j\), 
then the function \(\xi:= \sum_j \m_j \xi_j\) is a global admissible weight function. 
This allows us, in particular, to construct an admissible weight function when the singular set is contained in the common zero set of locally defined holomorphic functions.
\end{remark}

\begin{remark}
Having obtained an upper bound for the space laplacian of the potential, \(\De \phi\), we can show that the diameter is uniformly bounded. To see this, let us note that the set \(X - D\) is path connected. Let \(x\) be a fixed point outside of the divisor, \(x \in X - D\). Then, any point \(q \in X-D\) may be connected by a curve \(\ga \subset X - D\). Also, since the divisor is smooth, for any point on the divisor, \(p \in D\), there is a curve \(\ga\) connecting \(p\) to the point \(x\), contained in  \(X - D\) except at \(p\), which is perpendicular to \(D\) at \(p\). 

   Let \(d_{\phi}\left(p, q\right)\) denote the distance with respect to the metric \(\om_\phi\) between two points \(p\) and \(q\). Fix a point \(x \in X - \calS\). Then, by the triangle inequality,
\[
\operatorname{diam} \left( M \right)
\leq
\sup_{p,q} \left( d_{\phi}\left( p, x \right)
+
d_{\phi} \left( x,q \right)
\right)
\]
However, \(d_{\phi} \left( x,p \right)\) can be estimated from above by measuring the length of the curve \(\ga\), connecting \(p\) to \(x\), described in the previous paragraph. The length of any such curve, in turn, can be estimated since we have growth rate of \(O(\vert s \vert_h^{2\be - 2})\) close to the divisor for the metric.

\end{remark}






\begin{remark}\textbf{Singularities along a totally real submanifold}
One may observe that the one example of of admissible function is the distance function to a totally real submanifold, \(\calR\) of \(X^n\) once we one has that, \(n\), the complex dimension of \(X\), is larger than 1.

\end{remark}

\appendix

\section{\label{appA}}
In this appendix, we shall provide the details we omitted, including the proof of \ref{eq-4.8}, in the proof of laplacian estimates. In these calculations we follow \cite{He12}.
 We will henceforth use the normal coordinates, in which at a the given point \(q\), \(g_{\al \bbe} = \de_{\al \be}\), \(\del_{\ka}g_{\al \bbe} = \del_{\bla}g_{\al \bbe}=0\), and \(g_{\phi, \al \bbe} = \de_{\al \be} ( 1 + \phi_{\al \bbe}) \), whenever coordinates appear.

We first recall the equation \ref{linearisation}, the linearisation of the operator:

\begin{equation}
\label{lin-psi}
\fD \psi = \Delta_\phi \psi + \frac{\psi_{tt} + g^{\al \bla}_\phi g^{\ka \bbe}_\phi \phi_{t\al} \phi_{t \bbe} \psi_{\ka \bla} - g^{\al \bbe}_\phi \left( \psi_{t\al} \phi_{t \bbe} + \psi_{t \bbe} \phi_{t\al} \right)}{\calG(\phi)}
\end{equation}

If we substitute \(\psi = \phi\), we obtain the following:

\begin{equation}
\label{lin-phi}
\fD \phi 
=
(n+1) 
-
\sum_\be {1 \over 1 + \phi_{\be \bbe}}
-
{1 \over \calG(\phi)}
\sum_\be \frac{ \phi_{t\be} \phi_{t \bbe}}{ (1 + \phi_{\be \bbe})^2}
\end{equation}

And for \(\psi = t^2\):

\begin{equation}
\label{lin-t-2}
\fD t^2
=
{2 \over \calG(\phi)}
\end{equation}

We shall also need the following identity later in calculations:

\begin{equation}
\label{lin-log}
\fD \log \psi
=
{\fD \psi \over \psi}
-
{g_{\phi}^{\ka \bla} \psi_{\ka} \psi_{\bla} \over \psi^2}
-
\frac{\left( \psi_t - g_{\phi}^{\al \bla} \phi_{t\al} \psi_{\bla} \right) \left(\psi_t - g_{\phi}^{\ka \bla} \phi_{t \bbe} \psi_{\ka} \right) }
{\psi^2 \calG(\phi)}
=:
{\fD \psi \over \psi}
-
{g_{\phi}^{\ka \bla} \psi_{\ka} \psi_{\bla} \over \psi^2}
-
\calA(\psi)
\end{equation}

We have implicitly defined \(\calA(\psi)\) in the identity above.

 Let us substitute \(\psi = \De \phi\) and obtain the following:

\begin{equation}
\label{A-2}
\fD ( \Delta \phi ) 
=
\Delta_{\phi} \Delta \phi
+
\frac{ \Delta \del_{tt} \phi + g^{\al \bla}_{\phi} g^{\ka \bbe}_{\phi} \left( \Delta \phi \right)_{\ka \bla} \phi_{t\al} \phi_{t \bbe} - g^{\al \bbe}_{\phi} \left( \left( \Delta \phi \right)_{t\al} \phi_{t \bbe} + \left( \Delta \phi \right)_{t \bbe} \phi_{t\al} \right)}{ \calG(\phi)}
\end{equation}

We can substitute the first two terms,  namely \( \De_{\phi} \De \phi + {\De \del{tt} \phi \over \calG(\phi)}\), following the calculations in section 2 , equations 2.7 and 2.9, of \cite{Ya78}, and obtain:

\begin{eqnarray}
\label{A-3}
\De_{\phi} \De \phi  
&=&
\sum_\ka g^{\al \bbe}_{\phi} g^{\m \bn}_{\phi} \phi_{\al \bn \ka} \phi_{\m \bbe \bka}
+ 
\De f
-
\De \log  \zeta_\eta^p
-
\De \log \left( \del_{tt} \phi- |d \del_t \phi|_{\phi}^2 \right)
+
I \nonumber \\
&\geq&
\sum_k g^{\al \bbe}_{\phi} g^{\m \bn}_{\phi} \phi_{\al \bn \ka} \phi_{\m \bbe \bka}
+ 
\De f
-
\frac{\De  \left( \del_{tt} \phi - |d \del_t \phi|_{\phi}^2 \right)}
{ \calG(\phi)}
+
\frac{\left \vert  d\calG(\phi)  \right \vert^2}
{\calG(\phi)^2}
+
I
-
\De \log  \zeta_\eta^p \nonumber\\
&\geq&
\sum_\ka g^{\al \bbe}_{\phi} g^{\m \bn}_{\phi} \phi_{\al \bn \ka} \phi_{\m \bbe \bka}
+ 
\De f
-
\frac{\De  \left( \del_{tt} \phi - |d \del_t \phi|_{\phi}^2 \right)}
{ \calG(\phi)}
+
I
-
\De \log  \zeta_\eta^p \nonumber
\end{eqnarray}

where \(C_1\) is the same constant that appears in equation \ref{6/1},  \(I = \sum_{\al,\ka} {1+\phi_{\ka \bka} \over 1 + \phi_{\al \bal}} R_{\al \bal \ka \bka} - S \). 
Here \(S\) and \(R\) denote scalar curvature and  curvature tensor respectively. 
   We note that if we let \(B\) be a positive constant such that \(- B \leq \inf R_{\al \bal \ka \bka} \), then \(I\) satisfies the following inequality:

\begin{equation}
\label{A-7}
I
\geq
- B (n+\De \phi) \sum_\al {1 \over 1 + \phi_{\al \bal}}
-B -S =:
- B (n+\De \phi) \sum_\al {1 \over 1 + \phi_{\al \bal}}
-C_2
\end{equation}

We would like to bound the terms containing time derivatives from below. 

By substituting \ref{A-3} into \ref{A-2}, this leads us to the following:

\begin{eqnarray}
\label{A-8}
  \fD ( \De \phi)
&\geq &
\De f
+
\sum_{\ka} g^{\al \bbe}_{\phi} g^{\m \bn}_{\phi} \phi_{\al \bm \ka} \phi_{\m \bbe \bka}
+
I
+
\calE(\phi) \nonumber - p\De \log \zeta_\eta\\
&+&
\frac{  g^{\al \bla}_{\phi} g^{\ka \bbe}_{\phi} \left( \Delta \phi \right)_{\ka \bla} \phi_{t \al} \phi_{t \bbe} - g^{\al \bbe}_{\phi} \left( \left( \Delta \phi \right)_{t\al} \phi_{t \bbe} + \left( \Delta \phi \right)_{t \bbe} \phi_{t\al} \right)}{ \calG(\phi)}
\end{eqnarray}

where

\begin{equation}
\calE(\phi)
=
{\De \left \vert d \phi_t\right \vert_{\phi}^2 \over \calG(\phi)}
\end{equation}

We now study the term \( \calE(\phi)\)  as follows. 
After calculations in normal coordinates, we observe that in the expression above, the numerator, \(\De |d \phi_t|_{\phi}^2\), can be written as follows

\[
\calE_1+\calE_2 
-
 g^{\al \bla}_{\phi} g^{\ka \bbe}_{\phi} \left( \Delta \phi \right)_{\ka \bla} \phi_{t\al} \phi_{t \bbe}
  +
   g^{\al \bbe}_{\phi} \left( \left( \Delta \phi \right)_{t\al} \phi_{t \bbe} 
 +
 \left( \Delta \phi \right)_{t \bbe} \phi_{t\al} \right)
\]

where the terms are defined as follows:
\begin{eqnarray}
\begin{split}
\calE_1
:=
 &g^{\al \bn}_{\phi} g^{\m \bbe}_{\phi} R_{\m \bn \ka \bla} \phi_{t\al} \phi_{t \bbe} 
 \left( g^{\ka \bla} + \phi_{\ka \bla} \right) \nonumber \\
\calE_2
:=              
&g^{\ka \bla}  
   g_{\phi}^{\al \bn} g_{\phi}^{\m \bbe} 
     \left \{
 	g_{\phi}^{\rho \bsi} \phi_{t \rho} \phi_{t \bbe}
 	\left(
 		\phi_{\al \bsi \ka} \phi_{\m \bn \bla} + \phi_{\m \bn \ka} \phi_{\al \bsi \bla}
 	\right)        
 	+
 	 \phi_{\m \bn \bla} 
 	 \left(
 	 	\phi_{t\al \ka} \phi_{t\bbe} + \phi_{t\al} \phi_{t\ka \bbe}
 	 \right) \right. \\
 	 &\left.+
 	 \phi_{\m \bn \ka}
 	 \left(
 	 	\phi_{t \al \bla} \phi_{t\bbe}+\phi_{t\al} \phi_{t\bbe \bla}
        	\right)
     \right \}
    \\
&+
g^{\ka \bla} g_{\phi}^{\al \bbe} \left( \phi_{t \al \bla}\phi_{t \bbe \ka} + \phi_{t \al \ka} \phi_{t \bbe \bla} \right) \nonumber\\
\end{split}
  \end{eqnarray}

The last calculation allows us to cancel the fourth order terms in \ref{A-8} with those of \(\calE \left( \phi \right) \). Since the derivation of the preceding inequality is done by straightforward, nevertheless long, calculations in normal coordinates, we omit the calculation and refer the reader to 2.10 in \cite{He12}.

We now use \ref{A-8} to obtain

\begin{eqnarray}
  \fD ( \De \phi)
&\geq &
\De f
+
\sum_\ka g^{\al \bbe}_{\phi} g^{\m \bn}_{\phi} \phi_{\al \bn \ka} \phi_{\m \bbe \bka}
+
I
+
{\calE_1 + \calE_2 \over \calG(\phi)}
-
p\De \log \zeta_\eta
\end{eqnarray}

By the definition of \(B\), we have \(R_{p \bq k \bl} \geq -B(\de_{pq} \de_{kl}+\de_{pl}\de_{kq})\). We may now combine this piece of information with the inequality \(n+\De \phi \geq 1 + \phi_{j \bj} \), and obtain:

\begin{eqnarray}
\label{A-10}
\calE_1
&\geq&
- g^{\al \bn}_{\phi} g^{\m \bbe}_{\phi}  B(\de_{\m \nu} \de_{\ka \la}+\de_{\m \la }\de_{\ka \nu}) \phi_{t\al} \phi_{t \bbe} 
  \left( g^{\ka \bla} + \phi_{\ka \bla} \right) \nonumber\\
  &>&
  -2 B(n+\De \phi) \sum_\be \frac{\phi_{t \be} \phi_{t \bbe}}{ (1 + \phi_{\be \bbe})^2}
\end{eqnarray}

Note that \(\calE_3\) appears in the numerator of the last term in \ref{A-8}. We can, therefore, combine \ref{A-7}, \ref{A-8} and \ref{A-10} and get:
\begin{eqnarray}
\fD ( \De \phi)
&\geq &
\De f
+
\sum_k g^{i \bj}_{\phi} g^{p \bq}_{\phi} \phi_{i \bq k} \phi_{p \bj \bk}
-
 B (n+\De \phi) \sum_i {1 \over 1 + \phi_{i \bi}}
-
C_2
+
{\calE_1 + \calE_2 \over \calG(\phi)}
-
p\De \log \zeta_\eta \nonumber \\
&\geq&
\De f
+
\sum_{\ka} g^{\al \bbe}_{\phi} g^{\m \bn}_{\phi} \phi_{\al \bn \ka} \phi_{\m \bbe \bka}
-
 B (n+\De \phi) \sum_\al {1 \over 1 + \phi_{\al \bal}}
-
C_2    \nonumber \\
&-&
{  2 B(n+\De \phi)  \over \calG(\phi)} \sum_\be \frac{\phi_{t \be} \phi_{t \bbe}}{ (1 + \phi_{\be \bbe})^2}
+
{\calE_2 \over \calG(\phi)}
-
p\De \log \zeta_\eta \nonumber \\
&\geq& 
\De f - 
C_2
-
 B (n+\De \phi) \sum_\al {1 \over 1 + \phi_{\al \bal}} 
 +
 {g_{\phi}^{\ka \bla} \left(\De \phi \right)_\ka \left(\De \phi \right)_{\bla} \over n + \De \phi} 
 -
{  2 B(n+\De \phi)  \over \calG(\phi)} \sum_\be \frac{\phi_{t\be} \phi_{t \bbe}}{ (1 + \phi_{\be \bbe})^2} \nonumber \\
&-&
p\De \log \zeta_\eta \nonumber
\end{eqnarray}

Where, in the last inequality, we have used the following consequence of the Schwarz inequality for the third order terms (cf. 2.15 in \cite{Ya78}):
\begin{equation*}
\label{eq-Schw}
\sum_\ka g^{\al \bbe}_{\phi} g^{\m \bn}_{\phi} \phi_{\al \bn \ka} \phi_{\m \bbe \bka}
\geq
{( d \De \phi, d \De \phi)_{\phi} \over n + \De \phi}
=
{g_{\phi}^{\ka \bla} \left(\De \phi \right)_\ka \left(\De \phi \right)_{\bla} \over n + \De \phi}
\end{equation*}

We now use the last inequality, \ref{lin-log}, with \(\psi = n + \De \phi\), and \ref{lin-phi} to obtain the following

\begin{eqnarray}
\fD \left( \log( n + \De \phi) - C \phi \right)
&=&
{\fD \left( \De \phi \right) \over n + \De \phi}
+
{\sum_\ka g^{\al \bbe}_{\phi} g^{\m \bn}_{\phi} \phi_{\al \bn \ka} \phi_{\m \bbe \bka} \over n + \De \phi}
-
\calA( n + \De \phi) \nonumber \\
&-&
(n+1)C
+
C \sum_\be {1 \over 1 + \phi_{\be \bbe}}
+
{C \over \calG(\phi)} \sum_\be {\phi_{t\be} \phi_{t \bbe} \over (1 + \phi_{\be \bbe})^2} \nonumber \\
&\geq&
{\De f -C_2 \over n + \De \phi}
-
p{ \De \log \zeta_\eta \over n + \De \phi}
-
{\calE_2(\phi) \over \calG(\phi) (n+\De \phi)} 
-
\calA( n + \De \phi) \nonumber \\
&+&
 \sum_\la \frac{ ( \Delta \phi )_\la ( \Delta \phi )_{\bla} }{ (1+\phi_{\la \bla}) (n+\Delta \phi)^2} - 
 \sum_\la \frac{\psi_\la \psi_{\bla}}{\psi^2 (1+\phi_{\la \bla})} \nonumber\\
&-&
(n+1)C
+
(C - B) \sum_\be {1 \over 1 + \phi_{\be \bbe}}
+
{C - 2B  \over \calG(\phi)} \sum_\be {\phi_{t\be} \phi_{t \bbe} \over (1 + \phi_{\be \bbe})^2} \nonumber \\
&=&
{\De f -C_2 \over n + \De \phi}
-
p{ \De \log \zeta_\eta \over n + \De \phi}
-
{\calE_2 \over \calG(\phi) (n+\De \phi)} 
-
\calA( n + \De \phi) \nonumber \\
&-&
(n+1)C
+
(C - B) \sum_\be {1 \over 1 + \phi_{\be \bbe}}
+
{C - 2B  \over \calG(\phi)} \sum_\be {\phi_{t\be} \phi_{t \bbe} \over (1 + \phi_{\be \bbe})^2} \nonumber
\end{eqnarray}

for  any constant \(C\), which is \ref{eq-4.8}. \\ \\

We now turn to proving \ref{eq-4.12} based on \ref{eq-4.9/1}. Recall that by \ref{eq-4.9/1} we had:

\begin{eqnarray}
\fD \left( \log w - C\phi \right) 
&\geq& 
\frac{\Delta f - C_2}{n+\De \phi} 
- 
 \calA(n+\De \phi) + \calE_2 \frac{1}{(n+\Delta \phi) \calG(\phi)} 
-
(n+1)C
+ 
\sum_\ka \frac{1}{1+\phi_{\ka \bka}} \nonumber \\
\end{eqnarray}

It will suffice to prove that 
\begin{equation}
\calE_2
\geq
(n+\De \phi)
\calA(n+\De \phi)
 \calG(\phi)
\end{equation}

which is equivalent to the following:

\[
\calE_2
(n+\De \phi)
\geq
\left( \psi_t - g_{\phi}^{\al \bla} \phi_{t\al} \psi_{\bla} \right) \left(\psi_t - g_{\phi}^{\ka \bbe} \phi_{t \bbe} \psi_{\ka} \right)
\]

for \(\psi = n + \De \phi\). Since this also follows from a straightforward calculation, we refer the reader to (2.21) in \cite{He12}.

\end{document}